\newtheorem{theorem}{Theorem}[section]
\newtheorem{lemma}[theorem]{Lemma}
\newtheorem{proposition}[theorem]{Proposition}
\newtheorem{corollary}[theorem]{Corollary}
\newtheorem*{claim}{Claim}
\newtheorem{question}[theorem]{Question}
\newtheoremstyle{definition}
  {6pt}
  {6pt}
  {}
  {}
  {\bfseries}
  {.}
  {.5em}
  {}%
\theoremstyle{definition}
\newtheorem{definition}[theorem]{Definition}
\newtheoremstyle{remark}
  {6pt}
  {6pt}
  {}
  {}
  {\bfseries}
  {.}
  {.5em}
  {}%
\theoremstyle{remark}
\newtheorem{remark}[theorem]{Remark}
\newtheoremstyle{example}
  {6pt}
  {6pt}
  {}
  {}
  {\bfseries}
  {.}
  {.5em}
  {}%
\theoremstyle{example}
\newtheorem{example}[theorem]{Example}
\renewcommand\@makefntext[1]{%
\setlength\parindent{1em}%
\noindent
\makebox[1.8em][r]{}{#1}}
\DeclareMathOperator{\codim}{codim}
\DeclareMathOperator{\supp}{supp}
\DeclareMathOperator{\length}{length}
\DeclareMathOperator{\rank}{rank}
\DeclareMathOperator{\ch}{char}
\title[On the Orlik--Terao ideal and the relation space of a hyperplane arrangement]{On the Orlik--Terao ideal and the relation space\\
of a hyperplane arrangement}
\author{Dinh Van Le}
\address{Universit\"at Osnabr\"uck, Institut f\"ur Mathematik, 49069 Osnabr\"uck, Germany}
\email{dlevan@uos.de}
\author{FATEMEH MOHAMMADI}
\address{Institute of Science and Technology Austria,
3400 Klosterneuburg, Austria}
\email{fatemeh.mohammadi@ist.ac.at}
\date{2 December 2014}
\tikzstyle{Cwhite}=[scale = .8,circle, fill = white, minimum size=3mm]
\tikzstyle{Cgray}=[scale = .4,circle, fill = gray, minimum size=3mm]
\tikzstyle{Cblack2}=[scale = .2,circle, fill = black, minimum size=5mm]
\tikzstyle{Cblack}=[scale = .7,circle, fill = black, minimum size=3mm]
\tikzstyle{C0}=[scale = .9,circle, fill = black!0, inner sep = 0pt, minimum size=3mm]
\tikzstyle{C1}=[scale = .7,circle, fill = black!0, inner sep = 0pt, minimum size=3mm]
\tikzstyle{Cred}=[scale = .4,circle, fill = red, minimum size=3mm]
\tikzstyle{Cblue}=[scale = .4,circle, fill =blue, minimum size=3mm]
\begin{document}

\begin{abstract}
 The relation space of a hyperplane arrangement is the vector space of all linear dependencies among the defining forms of the hyperplanes in the arrangement. In this paper, we study the relationship between the relation space and the Orlik--Terao ideal of an arrangement. In particular, we characterize spanning sets of the relation space in terms of the Orlik--Terao ideal. This result generalizes a characterization of 2-formal arrangements due to Schenck and Toh\v{a}neanu \cite[Theorem 2.3]{ST}. We also study the minimal prime ideals of subideals of the Orlik--Terao ideal associated to subsets of the relation space. Finally, we give examples to show that for a 2-formal arrangement, the codimension of the Orlik--Terao ideal is not necessarily equal to that of its subideal generated by the quadratic elements.
\end{abstract}

\maketitle

\section{Introduction}

The relation space of a hyperplane arrangement consists of linear relations among the defining forms of the hyperplanes in the arrangement. An arrangement is called \emph{2-formal} if its relation space is spanned by the relations of length 3. 2-formality is interesting because it is a common property of several important classes of arrangements: Falk and Randell \cite{FR1} introduced the notion of 2-formality and proved that $K(\pi,1)$ arrangements, rational $K(\pi,1)$ arrangements and arrangements with quadratic Orlik--Solomon algebra are 2-formal. They also conjectured that 2-formality holds for free arrangements. This conjecture was later resolved by Yuzvinsky \cite{Y3}, and a different proof for the result was given by Brandt and Terao \cite{BT}. In \cite{BB}, Bayer and Brandt showed that discriminantal arrangements over generic arrangements are 2-formal. Various combinatorial conditions which yield 2-formality were introduced by Falk \cite{F}. It should be mentioned that 2-formality is not 
combinatorially determined:
Yuzvinsky \cite{Y3} gave examples of two arrangements with the same intersection lattice, one of which is 2-formal while the other is not.

In \cite{ST}, Schenck and Toh\v{a}neanu explored the relationship between the Orlik--Terao ideal and the relation space by characterizing 2-formality in terms of the Orlik--Terao ideal. The goal of this paper is to study further connections between these two objects. First of all, we give in Section 3 characterizations of spanning sets of the relation space in terms of the Orlik--Terao ideal (Theorem \ref{th31}, Corollary \ref{co34}). As direct consequences, we obtain characterizations of 2-formal arrangements (Corollary \ref{co37}) as well as $k$-generated arrangements defined in \cite{BT} (Corollary \ref{co36}). The former one recovers one result of Schenck and Toh\v{a}neanu \cite[Theorem 2.3]{ST}. In Remark \ref{rm38} we clarify the proof of \cite[Theorem 2.3]{ST} by filling in some details.

Section 4 is devoted to the study of the minimal prime ideals of subideals of the Orlik--Terao ideal associated to subsets of the relation space (Theorem \ref{th41}, Proposition \ref{pr44}). We also give a sufficient condition for such a subideal to be prime (Proposition \ref{pr47}).

In the last section we provide examples to demonstrate that the codimension of the quadratic Orlik--Terao ideal is not governed by 2-formality (Examples \ref{ex51}, \ref{ex52}, \ref{ex53}). In fact, we show that in the class of 2-formal arrangements there does not exist a linear bound for the codimension of the Orlik--Terao ideal in terms of the codimension of the quadratic Orlik--Terao ideal (Corollary \ref{co54}).
This leads us to a question regarding the relationship between a free or $K(\pi,1)$ arrangement and the codimension of its quadratic Orlik--Terao ideal (Question \ref{qu55}).

Throughout the paper, unless otherwise stated, the following notation will be used. Let $\mathcal{A}=\{H_1,\ldots,H_n\}$ be a central hyperplane arrangement of rank $r_{\mathcal{A}}$ in a vector space $V$ over some field ${K}$. For $i=1,\ldots,n$, let $\alpha_i$ be the form in the dual space $V^*$ of $V$ that defines $H_i$, i.e., $\ker \alpha_i=H_i$. Let $S=K[x_1,\ldots,x_n]$ denote the polynomial ring in $n$ variables over $K$. For unexplained terminology, we refer to the book by Orlik and Terao \cite{OT}.

\section{Background}

In the present section we briefly review the relation space and the Orlik--Terao ideal of a hyperplane arrangement. A generalization of a construction due to Yuzvinsky \cite{Y3}, which plays an important role in this paper, will also be discussed.

\subsection{The relation space}
We adopt the terminology from \cite{BT}. The {\it relation space} of $\mathcal{A}$, denoted $F(\mathcal{A})$, is the kernel of the following ${K}$-linear map
$$S_1=\bigoplus_{i=1}^n{K}x_i\rightarrow V^*,\ x_i\mapsto \alpha_i\ \text{ for }\ i=1,\ldots,n.$$
Note that the above map has rank $r_{\mathcal{A}}$. Thus $\dim_K F(\mathcal{A})=n-r_{\mathcal{A}}$.

Elements of $F(\mathcal{A})$ are called {\it relations}.  Apparently, relations come from dependencies among hyperplanes in $\mathcal{A}$: whenever $\{H_{i_1},\ldots,H_{i_m}\}$ is a dependent subset of $\mathcal{A}$ and $a_t\in{K}$ are constants such that $\sum_{t=1}^ma_t\alpha_{i_t}=0$, then $r=\sum_{t=1}^ma_tx_{i_t}$ is a relation. In particular, every circuit $C$ of $\mathcal{A}$ gives rise to a unique (up to a constant) relation $r_C$. A trivial but useful fact is that $x_i\not\in F(\mathcal{A})$ for $i=1,\ldots,n.$ Given a relation $r=\sum_{i=1}^na_ix_{i}$, the \emph{support} of $r$ is the set $\supp(r)=\{i\mid a_i\ne 0\}$. The \emph{length} of $r$, denoted $\length(r)$, is the cardinality of its support.

In order to state the results in the next section, we need a slight generalization of the notions of formal and $k$-generated arrangements introduced in \cite{FR1} and \cite{BT} respectively.

\begin{definition}
 Let $\mathfrak{R}$ be a subset of $F(\mathcal{A})$. We say that $\mathcal{A}$ is $\mathfrak{R}$-\emph{generated} if $F(\mathcal{A})$ is spanned by $\mathfrak{R}$. If $\mathfrak{R}$ is the set of relations of length at most $k+1$ for some $k\geq 2$, then $\mathcal{A}$ is called $k$-\emph{generated}. 2-generated arrangements are also called \emph{2-formal} arrangements.
\end{definition}

\subsection{The Orlik--Terao algebra}

For a polynomial $f\in S$, let $\Lambda(f)$ be the least common multiple of the monomials of $f$. (By convention, $\Lambda(0)=1$.) Then the evaluation $f(x_1^{-1},\ldots,x_n^{-1})$ of $f$ at $(x_1^{-1},\ldots,x_n^{-1})$ can be uniquely written in the form
$$f(x_1^{-1},\ldots,x_n^{-1})=\frac{\iota(f)(x_1,\ldots,x_n)}{\Lambda(f)(x_1,\ldots,x_n)}\quad \text{with}\quad \iota(f)(x_1,\ldots,x_n)\in S.$$
Thus we obtain a map $\iota:S\rightarrow S,\ f\mapsto \iota(f)$. For example, if $r=\sum_{i=1}^na_ix_{i}\in F(\mathcal{A})$, then $\Lambda(r)=x_{\supp(r)}$ and
$$\iota(r)=\sum_{i\in\supp(r)}a_ix_{\supp(r)-i}.$$
Here, for a subset $\Gamma$ of $[n]:=\{1,\ldots,n\}$ we write $x_\Gamma=\prod_{i\in\Gamma}x_i$. Note that $\iota(r)$ is a homogeneous polynomial of degree $\length(r)-1.$

The straightforward proof of the following lemma is omitted.

 \begin{lemma}\label{lm22}
  The map $\iota:S\rightarrow S$ described above has the following properties
 $$\begin{aligned}
 \iota(f)(x_1^{-1},\ldots,x_n^{-1})&=\frac{f(x_1,\ldots,x_n)}{\Lambda(f)(x_1,\ldots,x_n)},\\
 \iota(fg)&=\iota(f)\iota(g)
 \end{aligned}$$
 for all $f,g\in S$.
 \end{lemma}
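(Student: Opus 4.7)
The plan is to reduce both assertions to a single algebraic observation: the multiplicativity $\Lambda(fg) = \Lambda(f)\Lambda(g)$ for all $f, g \in S$. Once this is in hand, both identities fall out of the definition $\iota(f) = \Lambda(f) \cdot f(x_1^{-1},\ldots,x_n^{-1})$ by direct manipulation.

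For the first identity, I would substitute $x_i \mapsto x_i^{-1}$ in the defining equation $f(x_1^{-1},\ldots,x_n^{-1}) = \iota(f)(x_1,\ldots,x_n)/\Lambda(f)(x_1,\ldots,x_n)$. Because $\Lambda(f)$ is a single monomial, one has $\Lambda(f)(x_1^{-1},\ldots,x_n^{-1}) = 1/\Lambda(f)(x_1,\ldots,x_n)$, and solving for $\iota(f)(x_1^{-1},\ldots,x_n^{-1})$ gives the claim. For the second identity, I compute $\iota(fg) = \Lambda(fg)\cdot(fg)(x_1^{-1},\ldots,x_n^{-1}) = \Lambda(fg)\cdot f(x_1^{-1},\ldots,x_n^{-1})\cdot g(x_1^{-1},\ldots,x_n^{-1}) = \iota(f)\iota(g)\cdot\Lambda(fg)/(\Lambda(f)\Lambda(g))$, after which the multiplicativity of $\Lambda$ finishes the job.

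The main (and really only) nontrivial step is thus the multiplicativity $\Lambda(fg) = \Lambda(f)\Lambda(g)$. I would argue one variable at a time: writing $e_i(h)$ for the largest exponent of $x_i$ appearing in any monomial of $h$, I need $e_i(fg) = e_i(f) + e_i(g)$. The inequality $\leq$ is obvious from collecting terms. For $\geq$, write $f = x_i^{e_i(f)} f' + (\text{terms of lower } x_i\text{-degree})$ and $g = x_i^{e_i(g)} g' + \cdots$, where $f', g'$ are nonzero polynomials in $K[x_1,\ldots,\widehat{x_i},\ldots,x_n]$. The coefficient of $x_i^{e_i(f)+e_i(g)}$ in $fg$, viewed as a polynomial in the remaining variables, is $f' g'$, which is nonzero because $S$ is an integral domain; hence some monomial of $fg$ has $x_i$-exponent $e_i(f)+e_i(g)$.

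The subtle point I want to guard against is that cancellations among the products of top-degree coefficients could conceivably drop the exponent below $e_i(f)+e_i(g)$; the integral domain argument is exactly what rules this out. Apart from that, everything is a routine verification, which is presumably why the authors chose to omit the proof.
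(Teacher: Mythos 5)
Your proof is correct, and since the paper explicitly omits the proof of this lemma (``The straightforward proof of the following lemma is omitted''), there is no argument of the authors to compare against. Your reduction of both assertions to the multiplicativity $\Lambda(fg)=\Lambda(f)\Lambda(g)$ is the natural route: the first identity follows, as you say, by substituting $x_i\mapsto x_i^{-1}$ into the defining equation and using that $\Lambda(f)$ is a single monomial so that $\Lambda(f)(x_1^{-1},\ldots,x_n^{-1})=1/\Lambda(f)(x_1,\ldots,x_n)$; the second follows by expanding $\iota(fg)$ via the definition once multiplicativity of $\Lambda$ is in hand. Your proof of that multiplicativity, reading off the top $x_i$-degree coefficient of $fg$ as the product $f_{e_i(f)}g_{e_i(g)}$ of the top coefficients of $f$ and $g$ and invoking that $S$ is an integral domain, is precisely the step worth checking; the cancellation concern you flag is real and the domain argument is the correct way to dispose of it. One trivial case is not explicitly addressed: if $f=0$ (or $g=0$), then $\Lambda(0)=1$ by convention, so $\Lambda$ is not multiplicative there; but then $\iota(f)=0$ and both sides of each identity vanish, so nothing is lost.
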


\begin{definition}
Let $\mathcal{A}$ be a central arrangement. We call $I(\mathcal{A})=(\iota(r)\mid r\in F(\mathcal{A}))$ the {\it Orlik--Terao ideal}, and $\mathbf{C}(\mathcal{A})=S/I(\mathcal{A})$ the {\it Orlik--Terao algebra} of $\mathcal{A}$.
\end{definition}

The Orlik--Terao algebra was introduced by Orlik and Terao in \cite{OT2}. This algebra, which can be viewed as a commutative analogue of the well-studied Orlik--Solomon algebra \cite{OS}, has been the topic of a number of recent researches \cite{BP,DGT,Le,LR,MP,PS,SSV,Sc,ST,T}. We quote here two basic facts about the Orlik--Terao ideal that are relevant to our work. Recall that every circuit $C$ of $\mathcal{A}$ may be assigned to a unique relation $r_C\in F(\mathcal{A})$. Let $\mathcal{C}(F(\mathcal{A}))$ denote the set of relations coming from circuits by this way. Note that $\mathcal{C}(F(\mathcal{A}))$ is a finite subset of $F(\mathcal{A})$.

\begin{theorem}[{\cite[Theorem 4]{PS}}]\label{th24}
 Let $\mathcal{A}$ be a central arrangement. Then the set $\{\iota(r)\mid r\in\mathcal{C}(F(\mathcal{A}))\}$ is a universal Gr\"{o}bner basis for the Orlik--Terao ideal $I(\mathcal{A})$. In particular, $I(\mathcal{A})=(\iota(r)\mid r\in \mathcal{C}(F(\mathcal{A}))$.
\end{theorem}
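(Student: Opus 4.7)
The plan is to fix an arbitrary term order $<$ on $S$ and prove the stronger claim that the initial ideal $\mathrm{in}_<(I(\mathcal{A}))$ coincides with the squarefree monomial ideal
$$J_< := \bigl(\mathrm{in}_<(\iota(r_C)) : C \in \mathcal{C}(F(\mathcal{A}))\bigr).$$
Once this identity is established, it follows immediately that $\{\iota(r_C) : C \in \mathcal{C}(F(\mathcal{A}))\}$ is a Gr\"obner basis with respect to $<$. Because $<$ was arbitrary, the set is then a universal Gr\"obner basis, and the \emph{in particular} clause is the standard fact that a Gr\"obner basis generates its ideal.

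The easy step is the containment $J_< \subseteq \mathrm{in}_<(I(\mathcal{A}))$, which is tautological since each $\iota(r_C)$ belongs to $I(\mathcal{A})$. It is useful to identify the generators of $J_<$ explicitly. For any circuit $C$, the associated relation $r_C = \sum_{i\in C} a_i x_i$ has all coefficients nonzero by the minimality of $C$, so from the formula
$$\iota(r_C) = \sum_{i \in C} a_i \, x_{C \setminus \{i\}}$$
recorded just before Lemma \ref{lm22}, one sees that $\iota(r_C)$ is a sum of $|C|$ distinct squarefree monomials of degree $|C|-1$. Its $<$-leading monomial is therefore a single squarefree monomial $x_{C \setminus \{i_C\}}$ for a unique index $i_C \in C$, and $J_<$ is nothing but the broken-circuit monomial ideal of the matroid of $\mathcal{A}$ in the ordering that $<$ induces on the variables.

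The reverse containment I would handle by a Hilbert-series comparison. A theorem of Orlik and Terao asserts that $\mathbf{C}(\mathcal{A})$ is isomorphic as a graded $K$-vector space to the Orlik-Solomon algebra of $\mathcal{A}$, and hence has Hilbert series $\sum_k |\mathrm{nbc}_k(\mathcal{A},<)|\, t^k$, where $\mathrm{nbc}_k$ counts $k$-subsets of $[n]$ containing no broken circuit. On the other hand, $S/J_<$ is the Stanley-Reisner ring of the broken-circuit complex, so its standard monomial basis is in bijection with precisely those nbc subsets and $S/J_<$ has the same Hilbert series. Combining $J_< \subseteq \mathrm{in}_<(I(\mathcal{A}))$ with the Gr\"obner-basis identity $\dim_K (S/\mathrm{in}_<(I(\mathcal{A})))_k = \dim_K \mathbf{C}(\mathcal{A})_k$ forces agreement of dimensions in every degree, so $J_< = \mathrm{in}_<(I(\mathcal{A}))$.

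The main obstacle is the Hilbert-series input: establishing that $\mathbf{C}(\mathcal{A})$ has the nbc Hilbert series is itself a nontrivial theorem of Orlik-Terao, not a formal consequence of the definitions. A more direct alternative---showing by hand that every $\iota(r)$ for $r \in F(\mathcal{A})$ lies in the ideal generated by the $\iota(r_C)$---is combinatorially delicate because $\iota$ is multiplicative but not additive (Lemma \ref{lm22}), and a linear expression of a general relation $r$ in terms of circuit relations does not translate transparently into a polynomial identity involving $\iota(r)$ and the $\iota(r_C)$.
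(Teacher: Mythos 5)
The paper does not prove Theorem~\ref{th24}; it cites it verbatim from Proudfoot--Speyer \cite{PS}, so there is no internal proof to compare against. Your strategy -- fix an arbitrary term order $<$, identify $J_<$ with the broken-circuit (Stanley--Reisner) ideal of the matroid in the variable order induced by $<$, observe $J_<\subseteq\mathrm{in}_<(I(\mathcal{A}))$, and upgrade this to equality by matching Hilbert series -- is in fact the standard route and the one used in \cite{PS}, so the overall architecture is right.

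However, the Hilbert-series step as you state it is incorrect on both sides of the comparison. You claim $\mathbf{C}(\mathcal{A})=S/I(\mathcal{A})$ is isomorphic as a graded vector space to the Orlik--Solomon algebra and hence has Hilbert series $\sum_k|\mathrm{nbc}_k|\,t^k$. That cannot be: by Theorem~\ref{th25} the ideal $I(\mathcal{A})$ has codimension $n-r_{\mathcal{A}}$, so $\mathbf{C}(\mathcal{A})$ has Krull dimension $r_{\mathcal{A}}$ and is infinite-dimensional, whereas the Orlik--Solomon algebra is Artinian with Hilbert \emph{polynomial} $\pi(\mathcal{A},t)=\sum_k|\mathrm{nbc}_k|t^k$. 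The actual Orlik--Terao/Terao theorem is that $\mathbf{C}(\mathcal{A})$ has Hilbert series $\pi(\mathcal{A},t/(1-t))$. Symmetrically, the standard monomials of $S/J_<$ are \emph{not} in bijection with nbc subsets; they are all monomials (of any multiplicity) whose support is nbc. The Stanley--Reisner ring of the broken-circuit complex therefore also has Hilbert series $\pi(\mathcal{A},t/(1-t))$, a rational function, not the nbc polynomial. These two misstatements happen to be parallel, so after you correct both the identity $H(S/J_<,t)=H(\mathbf{C}(\mathcal{A}),t)=\pi(\mathcal{A},t/(1-t))$ does hold and, combined with $J_<\subseteq\mathrm{in}_<(I(\mathcal{A}))$ and the degree-wise equality $\dim_K(S/\mathrm{in}_<(I(\mathcal{A})))_k=\dim_K(S/I(\mathcal{A}))_k$, forces $J_<=\mathrm{in}_<(I(\mathcal{A}))$ as desired. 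You should also make explicit, as part of this step, the classical fact that the $f$-vector of the broken-circuit complex (hence $|\mathrm{nbc}_k|$) is independent of the chosen linear order on the ground set -- otherwise the phrase ``the'' nbc Hilbert series is ambiguous, since $J_<$ depends on $<$.
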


\begin{theorem}[{\cite[Proposition 2.1]{ST}}]\label{th25}
Let $\mathcal{A}$ be a rank $r_{\mathcal{A}}$ central arrangement of $n$ hyperplanes. Then $I(\mathcal{A})$ is a prime ideal in $S$ of codimension $n-r_{\mathcal{A}}\ (=\dim_K F(\mathcal{A}))$. Moreover, $I(\mathcal{A})$ contains no linear forms of $S$.
\end{theorem}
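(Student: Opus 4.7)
The plan is to identify $\mathbf{C}(\mathcal{A}) = S/I(\mathcal{A})$ with the subalgebra $R \subseteq K(V)$ generated by $1/\alpha_1, \ldots, 1/\alpha_n$. Primality will follow because $R$ is a subring of a field, the codimension will follow from a transcendence-degree computation, and the last assertion will come from a quick degree count using Theorem \ref{th24}.

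First, I would introduce the $K$-algebra homomorphism $\phi\colon S \to K(V)$ sending $x_i \mapsto 1/\alpha_i$ and verify the inclusion $I(\mathcal{A}) \subseteq \ker \phi$. Using the explicit formula $\iota(r) = \sum_{i \in \supp(r)} a_i\, x_{\supp(r) \setminus \{i\}}$, for any relation $r = \sum a_i x_i$ one computes
$$\phi(\iota(r)) \;=\; \sum_{i \in \supp(r)} a_i \prod_{j \in \supp(r) \setminus \{i\}} \frac{1}{\alpha_j} \;=\; \frac{1}{\prod_{j \in \supp(r)} \alpha_j}\sum_{i \in \supp(r)} a_i\, \alpha_i \;=\; 0.$$
Therefore $\phi$ descends to a surjection $\bar{\phi}\colon \mathbf{C}(\mathcal{A}) \twoheadrightarrow R$.

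The decisive step, which I expect to be the main obstacle, is proving that $\bar{\phi}$ is injective. To handle this I would invoke Theorem \ref{th24}: the elements $\iota(r_C)$ for circuits $C$ of $\mathcal{A}$ form a Gr\"obner basis of $I(\mathcal{A})$ with respect to a suitable monomial order (e.g., degree-lex tied to any linear order on the hyperplanes). A standard broken-circuit argument then shows that the squarefree monomials $x_\Gamma$ indexed by no-broken-circuit sets $\Gamma$ span $\mathbf{C}(\mathcal{A})$ over $K$. On the other hand, the classical linear-independence result for products $\prod_{i \in \Gamma} 1/\alpha_i$ indexed by matroid-independent sets $\Gamma$ (proved for instance by partial fractions or by induction on the rank) implies that the images $\phi(x_\Gamma)$ of these monomials are $K$-linearly independent in $K(V)$. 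Combining spanning with linear independence forces $\bar{\phi}$ to be an isomorphism; in particular $I(\mathcal{A}) = \ker \phi$, and since $R$ sits inside the field $K(V)$ it is an integral domain, so $I(\mathcal{A})$ is prime.

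For the codimension, $\dim R$ equals the transcendence degree of $\mathrm{Frac}(R) = K(1/\alpha_1, \ldots, 1/\alpha_n) = K(\alpha_1, \ldots, \alpha_n)$ over $K$, and this coincides with the $K$-dimension of $\mathrm{span}_K(\alpha_1, \ldots, \alpha_n) \subseteq V^*$, namely $r_{\mathcal{A}}$. Hence $\codim I(\mathcal{A}) = n - r_{\mathcal{A}} = \dim_K F(\mathcal{A})$. Finally, by Theorem \ref{th24} the ideal $I(\mathcal{A})$ is generated by the homogeneous polynomials $\iota(r_C)$, each of degree $\length(r_C) - 1$. Since $x_i \notin F(\mathcal{A})$ rules out length-$1$ relations and the distinctness of the $H_i$ rules out length-$2$ relations, every $\length(r_C) \geq 3$, so all generators have degree $\geq 2$. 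A homogeneous ideal whose generators all lie in degree $\geq 2$ contains no nonzero elements of degree $\leq 1$, so $I(\mathcal{A})$ contains no linear forms.
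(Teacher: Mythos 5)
The paper does not prove Theorem~\ref{th25}; it is quoted from \cite[Proposition 2.1]{ST} as a known result, so your proposal can only be judged on its own merits. The overall plan --- introduce $\phi\colon S\to K(V)$, $x_i\mapsto 1/\alpha_i$, identify $I(\mathcal{A})$ with $\ker\phi$, conclude primality from the fact that $\phi$ maps into a field, compute codimension via transcendence degree, and read off the absence of linear forms from the degrees of the circuit generators --- is exactly the standard route, and the easy steps (the computation showing $\phi(\iota(r))=0$, the transcendence-degree count, and the observation that every circuit has at least three elements so all generators lie in degree $\geq 2$) are carried out correctly.

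However, the injectivity step has a genuine error. You assert that the \emph{squarefree} monomials $x_\Gamma$ with $\Gamma$ a no-broken-circuit set span $\mathbf{C}(\mathcal{A})$ over $K$. That is false: $\mathbf{C}(\mathcal{A})$ has Krull dimension $r_{\mathcal{A}}>0$ (whenever the arrangement is nonempty), so it is infinite-dimensional as a $K$-vector space and cannot be spanned by a finite collection of monomials. You may be importing intuition from the Orlik--Solomon algebra, where the squarefree NBC monomials do give a $K$-basis; the Orlik--Terao algebra is a commutative polynomial quotient, not an exterior algebra. What Theorem~\ref{th24} actually gives, via the initial ideal, is that the $K$-basis of $\mathbf{C}(\mathcal{A})$ consists of \emph{all} monomials $x^a$ whose support $\{i : a_i > 0\}$ is NBC, with arbitrary positive exponents. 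Consequently the linear-independence input you need is that the images $\prod_i (1/\alpha_i)^{a_i}$ of these monomials are $K$-linearly independent in $K(V)$. That is a stronger statement than the classical squarefree fact you cite (partial-fraction independence of $\prod_{i\in\Gamma}1/\alpha_i$ over independent sets $\Gamma$), and it is essentially the main theorem of Orlik and Terao in \cite{OT2}. As written, your argument proves linear independence for a set that does not span and spanning for a set whose images you have not shown independent, so the two halves don't connect. To repair the proof, replace ``squarefree NBC monomials'' by ``all monomials with NBC support,'' and either cite the independence result from \cite{OT2} directly or give its proof (a degree-by-degree partial-fraction induction on rank); alternatively, compare Hilbert functions of $\mathbf{C}(\mathcal{A})$ and of $R$, both of which are governed by the broken-circuit complex.
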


\subsection{Arrangements from the relation space}

To each subset $\mathfrak{R}$ of the relation space $F(\mathcal{A})$, we will associate a hyperplane arrangement which is $\mathfrak{R}$-generated. This construction, which was first considered by Yuzvinsky \cite{Y3}, turns out to be very useful for studying spanning sets of $F(\mathcal{A})$ as well as subideals of the Orlik--Terao ideal. We follow the presentation of Falk \cite{F}.

By abuse of notation, we will identify $S_1=\bigoplus_{i=1}^n{K}x_i$ with the dual space $(K^n)^*$ of $K^n$. Then every relation $r\in F(\mathcal{A})$ defines a function on $K^n$. Let $\mathfrak{R}$ be a subset of $F(\mathcal{A})$, and let $Z(\mathfrak{R})$ be the zero locus of $\mathfrak{R}$, i.e.,
$$Z(\mathfrak{R})=\bigcap_{r\in\mathfrak{R}}\ker(r)=\{v\in K^n\mid r(v)=0\ \text{for all}\ r \in\mathfrak{R}\}.$$
Note that $Z(\mathfrak{R})$ is a linear subspace of $K^n$. Now intersecting $Z(\mathfrak{R})$ with the coordinate hyperplanes in $K^n$ we obtain the following hyperplane arrangement in $Z(\mathfrak{R})$:
$$\mathcal{A}(\mathfrak{R})=\{\tilde{H}_1,\ldots,\tilde{H}_n\},\quad \text{where}\quad \tilde{H}_i=\ker(x_i)\cap Z(\mathfrak{R}).$$
It should be noted that $\tilde{H}_i$ are indeed hyperplanes in $Z(\mathfrak{R})$: one has $Z(\mathfrak{R})\nsubseteq \ker(x_i)$ because $F(\mathcal{A})$ contains no variables.

\begin{proposition}\label{pr26}
 With notation as above, the following statements hold.
\begin{enumerate}
 \item
The relation space of $\mathcal{A}(\mathfrak{R})$ is $F(\mathcal{A}(\mathfrak{R}))=K\mathfrak{R}$, where $K\mathfrak{R}$ is the $K$-subspace of $F(\mathcal{A})$ spanned by $\mathfrak{R}$. In particular, the arrangement $\mathcal{A}(\mathfrak{R})$ is $\mathfrak{R}$-generated.

\item
$I(\mathcal{A}(\mathfrak{R}))=(\iota(r)\mid r\in K\mathfrak{R})=(\iota(r)\mid r\in \mathcal{C}(K\mathfrak{R}))$ and so
$$\codim I(\mathcal{A}(\mathfrak{R}))=\dim_K K\mathfrak{R}.$$
\end{enumerate}
\end{proposition}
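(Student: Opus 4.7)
The plan is to attack both parts by exploiting a single observation: the defining forms of $\mathcal{A}(\mathfrak{R})$ are simply the restrictions $x_i|_{Z(\mathfrak{R})}$ of the coordinate functions to the linear subspace $Z(\mathfrak{R}) \subseteq K^n$. Consequently, the relation space $F(\mathcal{A}(\mathfrak{R}))$ is, by definition, the kernel of the restriction map
\[
S_1 = \bigoplus_{i=1}^n K x_i \longrightarrow Z(\mathfrak{R})^*, \qquad x_i \longmapsto x_i|_{Z(\mathfrak{R})}.
\]
In other words, $\sum a_i x_i \in F(\mathcal{A}(\mathfrak{R}))$ exactly when the linear form $\sum a_i x_i \in (K^n)^*$ vanishes identically on $Z(\mathfrak{R})$.

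For part (i), I would now simply invoke the elementary fact that, for a finite-dimensional vector space and a subspace $W \subseteq (K^n)^*$, the annihilator of $\bigcap_{w\in W}\ker(w)$ is $W$ itself (double annihilator). Applied with $W = K\mathfrak{R}$, and using that $Z(\mathfrak{R}) = \bigcap_{r \in \mathfrak{R}}\ker(r) = \bigcap_{r \in K\mathfrak{R}}\ker(r)$, this yields $F(\mathcal{A}(\mathfrak{R})) = K\mathfrak{R}$. The second assertion of (i) is then immediate from the definition of $\mathfrak{R}$-generated.

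For part (ii), the first equality $I(\mathcal{A}(\mathfrak{R})) = (\iota(r)\mid r \in K\mathfrak{R})$ is nothing but the defining formula for the Orlik--Terao ideal combined with part (i). The second equality $(\iota(r)\mid r \in K\mathfrak{R}) = (\iota(r)\mid r \in \mathcal{C}(K\mathfrak{R}))$ follows by applying Theorem~\ref{th24} to the arrangement $\mathcal{A}(\mathfrak{R})$, since $\mathcal{C}(F(\mathcal{A}(\mathfrak{R}))) = \mathcal{C}(K\mathfrak{R})$ by (i). Finally, to compute the codimension I would note that $\mathcal{A}(\mathfrak{R})$ is central (both $Z(\mathfrak{R})$ and the $\ker(x_i)$ contain the origin) and apply Theorem~\ref{th25}: the codimension equals $\dim_K F(\mathcal{A}(\mathfrak{R})) = \dim_K K\mathfrak{R}$.

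I do not anticipate any genuine obstacle: the only nontrivial input is the double-annihilator identification in part (i), which is standard linear algebra and valid over any field. Once that identification is in hand, part (ii) is essentially a bookkeeping reduction to the already quoted Theorems~\ref{th24} and~\ref{th25}. The one point worth writing out carefully is the verification that the forms $x_i|_{Z(\mathfrak{R})}$ are genuinely nonzero, so that $\mathcal{A}(\mathfrak{R})$ is a bona fide arrangement of $n$ hyperplanes in $Z(\mathfrak{R})$, but this is exactly the observation already made in the text that $F(\mathcal{A})$ contains no variables.
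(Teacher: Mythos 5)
Your proposal is correct and follows essentially the same route as the paper: identify the defining forms of $\mathcal{A}(\mathfrak{R})$ as the restrictions $x_i|_{Z(\mathfrak{R})}$, deduce $F(\mathcal{A}(\mathfrak{R}))=K\mathfrak{R}$ via the double-annihilator identity, and then derive (ii) by applying Theorems \ref{th24} and \ref{th25}. The only cosmetic difference is that you name the double-annihilator step explicitly where the paper simply writes out the equality of sets.
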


\begin{proof}
 Since (ii) follows immediately from (i), Theorem \ref{th24} and Theorem \ref{th25}, it suffices to prove (i). By definition, $\tilde{H}_i=\ker(x_i\restriction_{Z(\mathfrak{R})})$, where $x_i\restriction_{Z(\mathfrak{R})}$ is the restriction of the function $x_i\in (K^n)^*$ to $Z(\mathfrak{R})$. Thus $F(\mathcal{A}(\mathfrak{R}))$ is the kernel of the map
$$S_1\rightarrow Z(\mathfrak{R})^*,\ x_i\mapsto x_i\restriction_{Z(\mathfrak{R})}\ \text{ for }\ i=1,\ldots,n.$$
Hence
$$\begin{aligned}
   F(\mathcal{A}(\mathfrak{R}))&=\{r\in S_1\mid\ \  r\restriction_{Z(\mathfrak{R})}=0\}=\{r\in S_1\mid\ r(v)=0\ \text{for all}\ v\in Z(\mathfrak{R})\}=K\mathfrak{R},
  \end{aligned}
$$
as claimed.
\end{proof}

\section{The relation space and the Orlik--Terao ideal}

In this section the relationship between the relation space and the Orlik--Terao ideal will be analyzed in more details. The main result is a characterization of spanning sets of the relation space in terms of the Orlik--Terao ideal. This generalizes the characterization of 2-formal arrangements given in \cite[Theorem 2.3]{ST}.

Let $\mathcal{A}$ be a central arrangement as before. Let $\mathfrak{R}$ be a subset of the relation space $F(\mathcal{A})$. The subspace spanned by $\mathfrak{R}$ is, as in the previous section, denoted by $K\mathfrak{R}$. We will be concerned with the following ideals in $S$:
\[
J(\mathfrak{R})=(\iota(r)\mid r\in\mathfrak{R})\quad \text{and}\quad I(\mathfrak{R})=(\iota(r)\mid r\in K\mathfrak{R}).
\]
 Obviously,  $J(\mathfrak{R})\subseteq I(\mathfrak{R})$ and both of them are subideals of the Orlik--Terao ideal of $\mathcal{A}$. Also, $I(\mathfrak{R})$ is the Orlik--Terao ideal of the arrangement $\mathcal{A}(\mathfrak{R})$ considered in Proposition \ref{pr26}.

Recall that $x_{[n]}=x_1\cdots x_n$. The main result of this section is as follows.

\begin{theorem}\label{th31}
Let $\mathcal{A}$ be a central hyperplane arrangement and let $\mathfrak{R}\subseteq F(\mathcal{A})$. Then the following conditions are equivalent:
\begin{enumerate}
\item
$\mathcal{A}$ is $\mathfrak{R}$-{generated};

\item
$I(\mathcal{A})={J(\mathfrak{R})}:x_{[n]};$

\item
$I(\mathcal{A})=\sqrt{J(\mathfrak{R})}:x_{[n]}.$
\end{enumerate}
\end{theorem}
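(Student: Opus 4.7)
The plan is to establish the unconditional identity
\[
I(\mathfrak{R}) = J(\mathfrak{R}):x_{[n]} = \sqrt{J(\mathfrak{R})}:x_{[n]},
\]
valid for \emph{any} subset $\mathfrak{R}\subseteq F(\mathcal{A})$. Once this is in hand, the theorem is immediate: by Proposition \ref{pr26} and Theorem \ref{th25}, the ideals $I(\mathfrak{R})$ and $I(\mathcal{A})$ are both prime, with $I(\mathfrak{R})\subseteq I(\mathcal{A})$ and codimensions $\dim_K K\mathfrak{R}$ and $\dim_K F(\mathcal{A}) = n-r_{\mathcal{A}}$, respectively. Conditions (ii) and (iii) each then amount to $I(\mathfrak{R}) = I(\mathcal{A})$, which (because both are primes, one contained in the other) is equivalent to equality of their codimensions, i.e.\ to $K\mathfrak{R}=F(\mathcal{A})$ -- precisely condition (i).

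For the inclusion $I(\mathfrak{R})\subseteq J(\mathfrak{R}):x_{[n]}$, I would take an arbitrary $r=\sum_j c_j r_j \in K\mathfrak{R}$ with $r_j\in\mathfrak{R}$ and exploit the evaluation identity of Lemma \ref{lm22}. Computing $r(x_1^{-1},\ldots,x_n^{-1})$ in two ways gives
\[
\frac{\iota(r)}{x_{\supp(r)}} \;=\; \sum_j c_j\,\frac{\iota(r_j)}{x_{\supp(r_j)}}.
\]
Multiplying both sides by $x_{[n]}$ clears all denominators, since each $\supp(r_j)$ is a subset of $[n]$; this exhibits $x_{[n]}\iota(r)$ as an explicit polynomial combination of the $\iota(r_j)$'s, placing it in $J(\mathfrak{R})$.

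The reverse inclusion $\sqrt{J(\mathfrak{R})}:x_{[n]}\subseteq I(\mathfrak{R})$ is a formal consequence of the good properties of $I(\mathfrak{R})$. Since $J(\mathfrak{R})\subseteq I(\mathfrak{R})$ and $I(\mathfrak{R})$ is radical (being prime by Proposition \ref{pr26}(ii) together with Theorem \ref{th25}), we have $\sqrt{J(\mathfrak{R})}\subseteq I(\mathfrak{R})$. Primality and the absence of linear forms (Theorem \ref{th25} again) imply that no variable $x_i$ lies in $I(\mathfrak{R})$, hence $x_{[n]}\notin I(\mathfrak{R})$; therefore $I(\mathfrak{R}):x_{[n]} = I(\mathfrak{R})$. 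Combining, $\sqrt{J(\mathfrak{R})}:x_{[n]}\subseteq I(\mathfrak{R})$, and the chain $I(\mathfrak{R})\subseteq J(\mathfrak{R}):x_{[n]}\subseteq \sqrt{J(\mathfrak{R})}:x_{[n]}\subseteq I(\mathfrak{R})$ closes up.

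The main obstacle is conceptual rather than computational: the decisive step is recognizing that $I(\mathfrak{R})$ is itself the Orlik--Terao ideal of the auxiliary arrangement $\mathcal{A}(\mathfrak{R})$ furnished by Proposition \ref{pr26}, which is what endows it with primality and freedom from the variables. Granted that, the manipulation of $\iota$ needed in the forward inclusion is essentially one line, and the rest is formal algebra.
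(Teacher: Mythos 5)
Your argument is correct, but it reverses the paper's order of deduction in a way that is worth noting. You prove the unconditional identity $I(\mathfrak{R})=J(\mathfrak{R}):x_{[n]}=\sqrt{J(\mathfrak{R})}:x_{[n]}$ first --- this is exactly the paper's Corollary \ref{co33} --- and then derive the theorem by comparing codimensions of the two primes $I(\mathfrak{R})\subseteq I(\mathcal{A})$. The paper instead establishes a general Lemma \ref{lm32}, which characterizes membership $f\in P(\mathfrak{R})$ for an \emph{arbitrary} polynomial $f$ in terms of $\iota(f)\in\sqrt{J(\mathfrak{R})}:x_{[n]}$, proves the theorem from that lemma element-by-element, and only then obtains Corollary \ref{co33} as a consequence via Proposition \ref{pr26}. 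Your route is more economical for the theorem alone: the harder direction of Lemma \ref{lm32} (going from $\iota(f)\in\sqrt{J(\mathfrak{R})}:x_{[n]}$ back to $f\in P(\mathfrak{R})$, which the paper handles by evaluating a power of $x_{[n]}\iota(f)$ at $(x_1^{-1},\ldots,x_n^{-1})$ and then using primality of $P(\mathfrak{R})$) is replaced in your argument by the one-line observation that $\sqrt{J(\mathfrak{R})}\subseteq\sqrt{I(\mathfrak{R})}=I(\mathfrak{R})$ and $I(\mathfrak{R}):x_{[n]}=I(\mathfrak{R})$, both consequences of $I(\mathfrak{R})$ being a prime Orlik--Terao ideal free of variables. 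What you lose is the extra generality of the paper's Lemma \ref{lm32}, which the authors reuse later (e.g.\ in the proof of Lemma \ref{lm43}). One small imprecision: after multiplying $\iota(r)/x_{\supp(r)}=\sum_j c_j\,\iota(r_j)/x_{\supp(r_j)}$ by $x_{[n]}$, the left side is $x_{[n]-\supp(r)}\iota(r)$, not $x_{[n]}\iota(r)$; the conclusion $x_{[n]}\iota(r)\in J(\mathfrak{R})$ still holds since $J(\mathfrak{R})$ is an ideal, but you should multiply through by the extra factor $x_{\supp(r)}$ to land on the stated quantity.
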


Let $P(\mathfrak{R})=(r\mid r\in \mathfrak{R})$ be the ideal in $S$ generated by the elements of $\mathfrak{R}$. Then the degree 1 component of $P(\mathfrak{R})$ is $K\mathfrak{R}.$ In particular, $P(\mathfrak{R})$ contains no variables as so does the relation space $F(\mathcal{A})$. Moreover, since $P(\mathfrak{R})$ is generated by linear forms, it is a prime ideal. The proof of the previous theorem is relied on the following lemma.

\begin{lemma}\label{lm32}
The following conditions are equivalent for a polynomial $f$ in $S$:
\begin{enumerate}
\item
$f\in P(\mathfrak{R})$;

\item
$\iota(f)\in \sqrt{J(\mathfrak{R})}:x_{[n]}.$
\end{enumerate}
If, in addition, $f$ is a linear form, then each of the above conditions is equivalent to any one of the following:
\begin{enumerate}[\rm(iii)]
\item
$f\in K\mathfrak{R}$;

\item[\rm(iv)]
$\iota(f)\in {J(\mathfrak{R})}:x_{[n]}.$
\end{enumerate}
\end{lemma}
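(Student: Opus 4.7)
The plan is to move into the Laurent polynomial ring $T=S_{x_{[n]}}$ and exploit the $K$-algebra involution $\sigma\colon T\to T$ defined by $x_i\mapsto x_i^{-1}$. The starting observation is that Lemma \ref{lm22} can be rewritten as $\iota(g)=\Lambda(g)\,\sigma(g)$ for every $g\in S$; since $\Lambda(g)$ divides $x_{[n]}$, it is a unit of $T$, so $\iota(g)$ and $\sigma(g)$ generate the same ideal of $T$. Applying this to $r\in\mathfrak{R}$, where $\sigma(\iota(r))=\sigma(\Lambda(r))\cdot r$ with $\sigma(\Lambda(r))$ a unit of $T$, gives the crucial identity
\[
J(\mathfrak{R})\,T\;=\;\sigma\bigl(P(\mathfrak{R})\bigr)\,T.
\]
Because $P(\mathfrak{R})$ is prime (generated by linear forms) and contains no variable, the right-hand side is a prime ideal of $T$, and therefore $J(\mathfrak{R})\,T$ is already radical, i.e.\ $\sqrt{J(\mathfrak{R})}\,T=J(\mathfrak{R})\,T$.

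With this in hand, I would deduce (i)$\Leftrightarrow$(ii) by a formal chain. The elementary observation $\sqrt{I}:x_{[n]}^{\infty}=\sqrt{I}:x_{[n]}$, valid for any ideal $I$ (if $x_{[n]}^Ng\in\sqrt{I}$ then $(x_{[n]}g)^N=x_{[n]}^Ng^N\in\sqrt{I}$, whence $x_{[n]}g\in\sqrt{I}$), reduces condition (ii) to $\iota(f)\in\sqrt{J(\mathfrak{R})}\,T=\sigma(P(\mathfrak{R}))\,T$. Writing $\iota(f)=\Lambda(f)\sigma(f)$ with $\Lambda(f)$ a unit of $T$ and then applying $\sigma$, this is equivalent to $f\in P(\mathfrak{R})\,T\cap S=P(\mathfrak{R})$, the last equality because $P(\mathfrak{R})$ is prime and misses every $x_i$.

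The remaining equivalences with (iii) and (iv) for linear $f$ come almost for free. Since $P(\mathfrak{R})$ is generated by the linear forms in $\mathfrak{R}$, its degree-one component is exactly $K\mathfrak{R}$, so for a linear form $f$, $f\in P(\mathfrak{R})$ iff $f\in K\mathfrak{R}$, yielding (i)$\Leftrightarrow$(iii). The direction (iv)$\Rightarrow$(ii) is immediate from $J(\mathfrak{R})\subseteq\sqrt{J(\mathfrak{R})}$. For (iii)$\Rightarrow$(iv), I would compute directly: writing $f=\sum_jc_jr_j$ with $c_j\in K$ and $r_j\in\mathfrak{R}$, one gets
\[
x_{[n]}\iota(f)\;=\;x_{[n]}\Lambda(f)\sigma(f)\;=\;\sum_j c_j\,\frac{x_{[n]}\Lambda(f)}{\Lambda(r_j)}\,\iota(r_j),
\]
and each coefficient $x_{[n]}\Lambda(f)/\Lambda(r_j)$ is a genuine element of $S$ because $\Lambda(r_j)$ divides $x_{[n]}$; hence $\iota(f)\in J(\mathfrak{R}):x_{[n]}$.

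The main obstacle, and the conceptual heart of the lemma, is the identity $J(\mathfrak{R})\,T=\sigma(P(\mathfrak{R}))\,T$ together with the primeness of this localization. It records that, away from the coordinate hyperplanes, the ideal $J(\mathfrak{R})$ is the Cremona image of the defining ideal of the linear subspace $Z(\mathfrak{R})$; once this dictionary is available, the manipulations of radicals and colons by $x_{[n]}$ are entirely formal and the fourfold equivalence falls out.
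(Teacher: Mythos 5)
Your proof is correct, and it takes a genuinely cleaner, more structural route than the paper's. Both arguments rest on the same core observation that inverting all variables turns $\iota(r)$ into a unit multiple of $r$, but you package this into the involution $\sigma$ of the Laurent ring $T=S_{x_{[n]}}$ and the resulting identity $J(\mathfrak{R})\,T=\sigma(P(\mathfrak{R})\,T)$, from which the primeness of $J(\mathfrak{R})\,T$ is immediate. This makes the radical structure transparent and reduces the equivalence (i)$\Leftrightarrow$(ii) to standard facts about saturation and contraction along a prime avoiding the multiplicative set, plus your elementary remark that $\sqrt{I}:x_{[n]}^\infty=\sqrt{I}:x_{[n]}$. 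The paper's proof instead works entirely at the level of elements: it substitutes $(x_1^{-1},\ldots,x_n^{-1})$ into explicit expressions, clears denominators, and tracks powers of $x_{[n]}$ by hand (e.g.\ obtaining $x_{[n]}^l\iota(f)\in J(\mathfrak{R})$ and then $(x_{[n]}\iota(f))^l\in J(\mathfrak{R})$), using Lemma~\ref{lm22} as the bookkeeping tool. The two arguments are thus morally the same, but yours has the advantage of establishing in one stroke that $J(\mathfrak{R})\,T$ is prime, which is essentially the content of Corollary~\ref{co33} and which the paper only derives afterwards; the paper's version has the advantage of staying inside $S$ and avoiding any appeal to localization theory. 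One minor point worth making explicit in your write-up: the equality $P(\mathfrak{R})\,T\cap S=P(\mathfrak{R})$ requires that $P(\mathfrak{R})$ be prime and disjoint from $\{x_{[n]}^k\}$, which you correctly flag, and which is exactly the role played in the paper's (ii)$\Rightarrow$(i) step by the observation ``$P(\mathfrak{R})$ is prime and $x_i\notin P(\mathfrak{R})$.''
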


\begin{proof}
(i)$\Rightarrow$(ii): Assume $f\in P(\mathfrak{R})$. Then we may write
\begin{equation}\label{eq55}
f=\sum_{r\in \mathfrak{R}} f_r r \quad \text{with} \quad f_r\in S.
 \end{equation}
 Evaluating this expression at $(x_1^{-1},\ldots,x_n^{-1})$ we get
\begin{equation}\label{eq552}
\frac{\iota(f)}{\Lambda(f)}=\sum_{r\in \mathfrak{R}}\frac{\iota(f_r)}{\Lambda({f_r})}\frac{\iota(r)}{\Lambda({r})}.
\end{equation}
Since $\Lambda({f_r}),\Lambda({r})$ are monomials, the above equality implies that $x_{[n]}^l\iota(f)\in J(\mathfrak{R})$ for some $l\geq 1$. Thus $(x_{[n]}\iota(f))^l\in J(\mathfrak{R})$, and hence $\iota(f)\in \sqrt{ J(\mathfrak{R})}:x_{[n]}$.

(ii)$\Rightarrow$(i): If $\iota(f)\in \sqrt{J(\mathfrak{R})}:x_{[n]}$, then there exists $l\geq 1$ such that
$$(x_{[n]} \iota(f))^l=\sum_{r\in \mathfrak{R}} g_r \iota(r),\quad \text{where}\quad g_r\in S.$$
 Evaluating the latter expression at $(x_1^{-1},\ldots,x_n^{-1})$ and noting Lemma \ref{lm22} we obtain
$$\Big(\frac{f}{x_{[n]}\Lambda(f)}\Big)^l=\sum_{r\in\mathfrak{R}}\frac{\iota(g_r)}{\Lambda({g_r})}\frac{r}{\Lambda({r})}.$$
It follows that $x_{[n]}^mf^l\in P(\mathfrak{R})$ for some $m\geq 1$. Since $P(\mathfrak{R})$ is prime and $x_i\not\in P(\mathfrak{R})$ for $i=1,\ldots,n$, we conclude that $f\in P(\mathfrak{R})$.

Now suppose that $f$ is a linear form. Then we immediately have (i)$\Leftrightarrow$(iii). The implication (iv)$\Rightarrow$(ii) is also clear. To complete the proof, we will show (i)$\Rightarrow$(iv). The argument is similar to the proof of (i)$\Rightarrow$(ii) with some minor changes. Since $f$ is a linear form, the polynomials $f_r$ in the representation \eqref{eq55} of $f$ can be chosen in $K$. Then $\Lambda({f_r})=1$ for every $r\in \mathfrak{R}$. Each relation $r$ is a linear form, so $\Lambda({r})$ divides $x_{[n]}$. It now follows from \eqref{eq552} that $x_{[n]}\iota(f)\in J(\mathfrak{R})$, whence $\iota(f)\in J(\mathfrak{R}):x_{[n]}.$
\end{proof}

We are now ready to prove Theorem \ref{th31}.

\begin{proof}[Proof of Theorem \ref{th31}]
First, recall that $J(\mathfrak{R})\subseteq I(\mathcal{A})$ and $I(\mathcal{A})$ is a prime ideal containing no variables (see Theorem \ref{th25}), hence
\begin{equation}\label{eq54}
J(\mathfrak{R}):x_{[n]}\subseteq \sqrt{J(\mathfrak{R})}:x_{[n]}\subseteq I(\mathcal{A}):x_{[n]}=I(\mathcal{A}).
\end{equation}

(i)$\Rightarrow$(ii): Let $r\in F(\mathcal{A})$. As $\mathcal{A}$ is $\mathfrak{R}$-generated, $r\in K\mathfrak{R}.$ So by Lemma \ref{lm32}, $\iota(r)\in J(\mathfrak{R}):x_{[n]}$. It follows that $I(\mathcal{A})\subseteq J(\mathfrak{R}):x_{[n]}$. Now from \eqref{eq54} we get $I(\mathcal{A})=J(\mathfrak{R}):x_{[n]}.$

(ii)$\Rightarrow$(iii): This is immediate from \eqref{eq54}.

(iii)$\Rightarrow$(i): Let $r\in F(\mathcal{A})$. Since $\iota(r)\in I(\mathcal{A})=\sqrt{J(\mathfrak{R})}:x_{[n]}$, it follows from Lemma \ref{lm32} that $r\in K\mathfrak{R}.$ Thus $\mathcal{A}$ is $\mathfrak{R}$-generated.
\end{proof}

Combining Proposition \ref{pr26} and Theorem \ref{th31} we obtain the following important and interesting corollary.

\begin{corollary}\label{co33}
 Let $\mathcal{A}$ be a central hyperplane arrangement and let $\mathfrak{R}\subseteq F(\mathcal{A})$. Then $$I(\mathfrak{R})=\sqrt{J(\mathfrak{R})}:x_{[n]}={J(\mathfrak{R})}:x_{[n]}.$$
 Moreover, $I(\mathfrak{R})$ is the unique associated prime ideal of $J(\mathfrak{R})$ which does not contain any variables.
\end{corollary}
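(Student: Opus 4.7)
The plan is to bootstrap Theorem \ref{th31} via the auxiliary arrangement $\mathcal{A}(\mathfrak{R})$ supplied by Proposition \ref{pr26}. The key observation is that although $\mathcal{A}$ itself need not be $\mathfrak{R}$-generated, the arrangement $\mathcal{A}(\mathfrak{R})$ is always $\mathfrak{R}$-generated (Proposition \ref{pr26}(i)), and its number of hyperplanes is still $n$, so the symbol $x_{[n]}$ means the same thing for both arrangements. Moreover, by Proposition \ref{pr26}(ii), $I(\mathfrak{R})$ coincides with the Orlik--Terao ideal of $\mathcal{A}(\mathfrak{R})$. Therefore I would apply the equivalence (i)$\Leftrightarrow$(ii)$\Leftrightarrow$(iii) of Theorem \ref{th31} to $\mathcal{A}(\mathfrak{R})$ with subset $\mathfrak{R}$, which immediately yields
$$I(\mathfrak{R}) = J(\mathfrak{R}):x_{[n]} = \sqrt{J(\mathfrak{R})}:x_{[n]}.$$

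For the second assertion about associated primes, my plan is first to promote the finite colon to the saturation. Observe the chain
$$J(\mathfrak{R}):x_{[n]}\ \subseteq\ J(\mathfrak{R}):x_{[n]}^{\infty}\ \subseteq\ I(\mathfrak{R}),$$
where the last inclusion holds because $I(\mathfrak{R})$ is prime (being an Orlik--Terao ideal, by Theorem \ref{th25}) and contains no variable, so any $f$ with $x_{[n]}^N f \in J(\mathfrak{R})\subseteq I(\mathfrak{R})$ lies in $I(\mathfrak{R})$. Combining this chain with the equality just established gives $J(\mathfrak{R}):x_{[n]}^{\infty}=I(\mathfrak{R})$.

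Now I would invoke the standard primary-decomposition fact that, for any ideal $J$ and any element $c$ of $S$, the associated primes of $J:c^{\infty}$ are precisely those associated primes of $J$ that do not contain $c$. Applied to $J=J(\mathfrak{R})$ and $c=x_{[n]}$, this says the associated primes of $I(\mathfrak{R})$ are exactly the associated primes of $J(\mathfrak{R})$ that miss every variable (since a prime avoiding $x_{[n]}$ must avoid each $x_i$). But $I(\mathfrak{R})$ is itself a prime ideal, so the only associated prime on the left is $I(\mathfrak{R})$. Hence $I(\mathfrak{R})$ is the unique associated prime of $J(\mathfrak{R})$ containing no variable, as claimed.

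I do not anticipate any serious obstacle: the first half is a direct translation through Proposition \ref{pr26} of the already-proved Theorem \ref{th31}, and the second half is a routine application of the ``saturation picks out the associated primes avoiding $c$'' lemma. The only mild care point is verifying that passing from $J(\mathfrak{R}):x_{[n]}$ to $J(\mathfrak{R}):x_{[n]}^{\infty}$ does not lose information, which is handled by squeezing it between two things already known to be equal.
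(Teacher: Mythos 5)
Your proposal is correct, and the first half is exactly the paper's route: apply Theorem \ref{th31} to the auxiliary arrangement $\mathcal{A}(\mathfrak{R})$, using Proposition \ref{pr26} to identify its Orlik--Terao ideal with $I(\mathfrak{R})$ and its number of hyperplanes with $n$.

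For the second assertion you diverge from the paper. The paper writes down the explicit intersection $J(\mathfrak{R}) = I(\mathfrak{R}) \cap \bigl(J(\mathfrak{R}) + (x_{[n]})\bigr)$ (easy to verify from the first assertion together with the fact that $I(\mathfrak{R})$ is prime and avoids the variables), and then reads off the conclusion from Theorem \ref{th25}: $I(\mathfrak{R})$ is prime with no variables, while every associated prime of $J(\mathfrak{R}) + (x_{[n]})$ contains $x_{[n]}$ and hence some variable. You instead promote the finite colon to the saturation $J(\mathfrak{R}):x_{[n]}^{\infty}$, observe it is squeezed to equal $I(\mathfrak{R})$, and then invoke the standard fact that $\operatorname{Ass}(J:c^{\infty})$ consists precisely of the associated primes of $J$ not containing $c$ (together with the observation that a prime avoids $x_{[n]}$ iff it avoids all the $x_i$). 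Both arguments are sound. The paper's version is slightly more self-contained, producing a concrete primary-like decomposition without appealing to the saturation lemma; your version is shorter and leverages an off-the-shelf commutative-algebra fact, at the price of quoting that lemma. Note also that your chain establishes, as a small bonus, the equality $J(\mathfrak{R}):x_{[n]} = J(\mathfrak{R}):x_{[n]}^{\infty}$, which the paper's phrasing does not make explicit.
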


\begin{proof}
 Recall from Proposition \ref{pr26} that $I(\mathfrak{R})$ is the Orlik--Terao ideal of the arrangement $\mathcal{A}(\mathfrak{R})$ which is $\mathfrak{R}$-generated. Thus the first assertion of the corollary is a consequence of Theorem \ref{th31}. The second assertion is deduced from Theorem \ref{th25} and the fact that
\begin{equation}\label{eq34}
 J(\mathfrak{R})=I(\mathfrak{R})\cap(J(\mathfrak{R})+x_{[n]})
\end{equation}
which follows easily from the first assertion.
\end{proof}

In the sequel, we use the standard notation from algebraic geometry.

\begin{corollary}\label{co34}
 Let $\mathcal{A}$ be a central hyperplane arrangement and let $\mathfrak{R}\subseteq F(\mathcal{A})$. Then the following conditions are equivalent:
\begin{enumerate}
 \item
$\mathcal{A}$ is $\mathfrak{R}$-generated;

\item
$\codim (I(\mathcal{A}))=\codim(J(\mathfrak{R}):x_{[n]}).$
\end{enumerate}
 If, in addition, the field $K$ is algebraically closed, then each of the above conditions is equivalent to the following one:
\begin{enumerate}
 \item[\rm(iii)]
$\codim \mathbf{V}(I(\mathcal{A}))=\codim(\mathbf{V}(J(\mathfrak{R}))\cap(K^*)^n).$
\end{enumerate}

\end{corollary}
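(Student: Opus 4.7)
The plan is to derive Corollary~\ref{co34} directly from Theorem~\ref{th31} and Corollary~\ref{co33}. The key observation is that, by Corollary~\ref{co33}, the colon ideal $J(\mathfrak{R}):x_{[n]}$ coincides with $I(\mathfrak{R})$, which by Proposition~\ref{pr26} is the Orlik--Terao ideal of the auxiliary arrangement $\mathcal{A}(\mathfrak{R})$. In particular, by Theorem~\ref{th25} applied to $\mathcal{A}(\mathfrak{R})$, the ideal $I(\mathfrak{R})$ is prime in $S$, contains no variables, and has codimension equal to $\dim_K K\mathfrak{R}$, while $\codim I(\mathcal{A})=\dim_K F(\mathcal{A})$.

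For the equivalence (i)$\Leftrightarrow$(ii), the forward direction is immediate from Theorem~\ref{th31}: if $\mathcal{A}$ is $\mathfrak{R}$-generated then $I(\mathcal{A})=J(\mathfrak{R}):x_{[n]}$, and the codimensions agree. For the converse, note that the inclusion $J(\mathfrak{R}):x_{[n]}=I(\mathfrak{R})\subseteq I(\mathcal{A})$ holds unconditionally; since both sides are prime ideals in the polynomial ring $S$, equality of their heights forces them to coincide, and Theorem~\ref{th31} then yields (i). (Equivalently, one may observe that (ii) just rephrases $\dim_K K\mathfrak{R}=\dim_K F(\mathcal{A})$ for the chain $K\mathfrak{R}\subseteq F(\mathcal{A})$, which is the definition of $\mathfrak{R}$-generated.)

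For the remaining equivalence, assume $K$ is algebraically closed. By Hilbert's Nullstellensatz, $\codim\mathbf{V}(I(\mathcal{A}))=\codim I(\mathcal{A})$. On the geometric side, $\mathbf{V}(J(\mathfrak{R}))\cap(K^*)^n$ is a locally closed subset of $K^n$ whose Zariski closure is the vanishing locus of the saturation $J(\mathfrak{R}):x_{[n]}^\infty$, and therefore has codimension equal to $\codim(J(\mathfrak{R}):x_{[n]}^\infty)$. I would then verify that this saturation collapses to the single colon: iterating, $J(\mathfrak{R}):x_{[n]}^{k+1}=(J(\mathfrak{R}):x_{[n]}):x_{[n]}^k=I(\mathfrak{R}):x_{[n]}^k=I(\mathfrak{R})$, since $I(\mathfrak{R})$ is prime and contains no $x_i$. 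Hence $\codim(\mathbf{V}(J(\mathfrak{R}))\cap(K^*)^n)=\codim I(\mathfrak{R})=\codim(J(\mathfrak{R}):x_{[n]})$, and (ii)$\Leftrightarrow$(iii) follows.

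The chief conceptual point is the primality-plus-saturation argument underlying (ii)$\Leftrightarrow$(iii); the rest is bookkeeping once Theorem~\ref{th31} and Corollary~\ref{co33} are in hand. I do not anticipate a genuine obstacle, since the heavy lifting has already been carried out in those earlier results.
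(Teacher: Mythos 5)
Your proof is correct and follows essentially the same route as the paper: both equivalences are reduced to Theorem~\ref{th31} and Corollary~\ref{co33}, with (i)$\Leftrightarrow$(ii) handled by the primality of $J(\mathfrak{R}):x_{[n]}=I(\mathfrak{R})$. The only minor difference is that for (ii)$\Leftrightarrow$(iii) you identify the Zariski closure of $\mathbf{V}(J(\mathfrak{R}))\cap(K^*)^n$ via the saturation $J(\mathfrak{R}):x_{[n]}^\infty$ and observe it collapses to $J(\mathfrak{R}):x_{[n]}$ after one colon, whereas the paper directly computes $\mathbf{I}(\mathbf{V}(J(\mathfrak{R}))-\mathbf{V}(x_{[n]}))=\sqrt{J(\mathfrak{R})}:x_{[n]}$ and invokes Corollary~\ref{co33} once more; the two calculations are interchangeable.
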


\begin{proof}
 By Corollary \ref{co33}, we know that $J(\mathfrak{R}):x_{[n]}$ is a prime ideal. So the equivalence (i)$\Leftrightarrow$(ii) follows immediately from Theorem \ref{th31}. When $K$ is an algebraically closed field, we obtain (ii)$\Leftrightarrow$(iii) from the following computation:
$$\begin{aligned}
   \mathbf{I}(\mathbf{V}(J(\mathfrak{R}))\cap(K^*)^{n})&=\mathbf{I}(\mathbf{V}(J(\mathfrak{R}))-\mathbf{V}(x_{[n]}))
=\mathbf{I}(\mathbf{V}(J(\mathfrak{R})):\mathbf{I}(\mathbf{V}(x_{[n]}))\\
&=\sqrt{J(\mathfrak{R})}:x_{[n]}=J(\mathfrak{R}):x_{[n]},
  \end{aligned}$$
see, e.g., \cite[Theorem 4.2.6, Corollary 4.4.8]{CLO}; the last equality holds by Corollary \ref{co33}.
\end{proof}

\begin{remark}\label{rm35}
 The proof of Corollary \ref{co34} shows, when $K$ is an algebraically closed field, the irreducible variety $\mathbf{V}(I(\mathfrak{R}))$ is the Zariski closure of $\mathbf{V}(J(\mathfrak{R}))\cap(K^*)^{n}$. Hence, in particular, $\codim(\mathbf{V}(J(\mathfrak{R}))\cap(K^*)^n)=\codim (I(\mathfrak{R}))=\dim_K K\mathfrak{R}$ (see Proposition \ref{pr26}).
\end{remark}

As a direct consequence of Theorem \ref{th31} and Corollary \ref{co34}, the following result characterizes $k$-generated arrangements in terms of the Orlik--Terao ideal.

\begin{corollary}\label{co36}
 Let $\mathcal{A}$ be a central arrangement. Given a number $k\geq 2$, let $I_{\leq k}(\mathcal{A})$ be the ideal in $S$ generated by all elements of the Orlik--Terao ideal $I(\mathcal{A})$ of degree at most $k$. Then the following conditions are equivalent:
\begin{enumerate}
\item
$\mathcal{A}$ is $k$-{generated};

\item
$I(\mathcal{A})={I_{\leq k}(\mathcal{A})}:x_{[n]};$

\item
$\codim I(\mathcal{A})=\codim(I_{\leq k}(\mathcal{A}):x_{[n]}).$
\end{enumerate}
If the field $K$ is algebraically closed, then each of the above conditions is equivalent to the following one:
\begin{enumerate}
 \item [\rm(iv)]
$\codim \mathbf{V}(I(\mathcal{A}))=\codim(\mathbf{V}(I_{\leq k}(\mathcal{A}))\cap(K^*)^n).$
\end{enumerate}
\end{corollary}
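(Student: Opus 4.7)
The plan is to deduce Corollary \ref{co36} from Theorem \ref{th31} and Corollary \ref{co34} by applying them to the specific subset
$$\mathfrak{R}_k = \{r \in F(\mathcal{A}) \mid \length(r) \leq k+1\} \subseteq F(\mathcal{A}).$$
By the very definition of $k$-generated, $\mathcal{A}$ is $\mathfrak{R}_k$-generated if and only if $\mathcal{A}$ is $k$-generated, so condition (i) of the corollary translates directly into condition (i) of Theorem \ref{th31} (and of Corollary \ref{co34}) for $\mathfrak{R}=\mathfrak{R}_k$.

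The key technical step is to verify the equality $J(\mathfrak{R}_k) = I_{\leq k}(\mathcal{A})$. The inclusion $J(\mathfrak{R}_k) \subseteq I_{\leq k}(\mathcal{A})$ is immediate: for each $r \in \mathfrak{R}_k$ the polynomial $\iota(r)$ is homogeneous of degree $\length(r)-1 \leq k$ and lies in $I(\mathcal{A})$. For the reverse inclusion, I would take a homogeneous element $g \in I(\mathcal{A})$ of some degree $d \leq k$, use Theorem \ref{th24} to write $g = \sum_j p_j\,\iota(r_j)$ with $r_j \in \mathcal{C}(F(\mathcal{A}))$ and $p_j \in S$, and then extract the degree-$d$ homogeneous component of each $p_j$. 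This yields a representation $g = \sum_j m_j\,\iota(r_j)$ in which each $m_j$ is a (scalar combination of) monomial(s) of degree $d-(\length(r_j)-1) \geq 0$; in particular $\length(r_j) \leq d+1 \leq k+1$, so $r_j \in \mathfrak{R}_k$ and hence $g \in J(\mathfrak{R}_k)$. Since $I_{\leq k}(\mathcal{A})$ is generated as an ideal by such homogeneous $g$'s, this gives the opposite inclusion.

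Once the identification $J(\mathfrak{R}_k) = I_{\leq k}(\mathcal{A})$ is in hand, the equivalences of the corollary are formal consequences of the previous results. The equivalence (i)$\Leftrightarrow$(ii) is Theorem \ref{th31} for $\mathfrak{R}=\mathfrak{R}_k$; the equivalence (i)$\Leftrightarrow$(iii), and when $K$ is algebraically closed the equivalence (i)$\Leftrightarrow$(iv), follow from Corollary \ref{co34} for $\mathfrak{R}=\mathfrak{R}_k$, after noting that $\mathbf{V}(J(\mathfrak{R}_k)) = \mathbf{V}(I_{\leq k}(\mathcal{A}))$.

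I do not expect any real obstacle here: the whole argument is a translation of the general statements already proved for an arbitrary subset $\mathfrak{R}\subseteq F(\mathcal{A})$ to the concrete choice $\mathfrak{R}=\mathfrak{R}_k$. The only non-formal input is the degree-bookkeeping identification $J(\mathfrak{R}_k) = I_{\leq k}(\mathcal{A})$, which is straightforward but worth stating explicitly since it is the bridge between the hypothesis formulated in terms of lengths of relations and the hypothesis formulated in terms of degrees of ideal generators.
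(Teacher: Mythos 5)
Your proposal is correct and follows exactly the route the paper has in mind: the paper presents Corollary~\ref{co36} as a ``direct consequence of Theorem~\ref{th31} and Corollary~\ref{co34}'' and gives no separate proof, while you make explicit the one bridge that needs to be checked, namely $J(\mathfrak{R}_k)=I_{\leq k}(\mathcal{A})$ for $\mathfrak{R}_k=\{r\in F(\mathcal{A})\mid \length(r)\leq k+1\}$, via the degree-bookkeeping argument using Theorem~\ref{th24} and the homogeneity of $I(\mathcal{A})$. This is the same approach, just with the implicit detail spelled out.
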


Setting $k=2$ in the preceding corollary, we obtain a characterization of 2-formal arrangements which recovers \cite[Theorem 2.3]{ST}.

\begin{corollary}\label{co37}
 Let $I_{\langle2\rangle}(\mathcal{A})$ be the ideal generated by the quadratic elements of $I(\mathcal{A})$. Then the following conditions are equivalent:
\begin{enumerate}
\item
$\mathcal{A}$ is 2-formal;

\item
$I(\mathcal{A})={I_{\langle 2\rangle}(\mathcal{A})}:x_{[n]};$

\item
$\codim I(\mathcal{A})=\codim(I_{\langle2\rangle}(\mathcal{A}):x_{[n]}).$
\end{enumerate}
If the field $K$ is algebraically closed, then each of the above conditions is equivalent to the following one:
\begin{enumerate}
 \item [\rm(iv)]
$\codim \mathbf{V}(I(\mathcal{A}))=\codim(\mathbf{V}(I_{\langle2\rangle}(\mathcal{A}))\cap(K^*)^n).$
\end{enumerate}
\end{corollary}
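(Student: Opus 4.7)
The plan is to obtain Corollary \ref{co37} as the $k=2$ specialization of Corollary \ref{co36}. By the definitions given in Section 2, a 2-formal arrangement is literally a 2-generated arrangement, so condition (i) of the two corollaries is the same statement. The only remaining task is to verify that, for $k=2$, the ideal $I_{\leq 2}(\mathcal{A})$ appearing in Corollary \ref{co36} coincides with the ideal $I_{\langle 2\rangle}(\mathcal{A})$ generated by the quadratic elements of $I(\mathcal{A})$.

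This identification follows at once from Theorem \ref{th25}, which asserts that the Orlik--Terao ideal $I(\mathcal{A})$ contains no linear forms. Since $I(\mathcal{A})$ also contains no nonzero elements of degree $0$, any element of $I(\mathcal{A})$ of degree at most $2$ is either zero or a homogeneous quadric. Consequently the generating set for $I_{\leq 2}(\mathcal{A})$ agrees (up to the irrelevant zero element) with that for $I_{\langle 2\rangle}(\mathcal{A})$, and the two ideals are equal as ideals in $S$.

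Once these two identifications are in place, conditions (ii), (iii) and (iv) of the present corollary reduce verbatim to conditions (ii), (iii) and (iv) of Corollary \ref{co36} with $k=2$, and all equivalences follow directly. I expect no real obstacle in writing this up: the substantive work -- namely, the correspondence $r\mapsto \iota(r)$ between relations of length at most $3$ and elements of $I(\mathcal{A})$ of degree at most $2$, and the colon-ideal manipulation by $x_{[n]}$ -- is already packaged in Theorem \ref{th31} and Lemma \ref{lm32}, upon which Corollary \ref{co36} rests. The argument is therefore a definitional unwinding, with Theorem \ref{th25} supplying the single nontrivial input (absence of linear forms in $I(\mathcal{A})$).
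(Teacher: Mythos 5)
Your proof is correct and follows the same route as the paper: Corollary \ref{co37} is obtained by specializing Corollary \ref{co36} to $k=2$. The only thing you add beyond the paper's one-line derivation is the (correct) verification that $I_{\leq 2}(\mathcal{A}) = I_{\langle 2\rangle}(\mathcal{A})$, which indeed follows from Theorem \ref{th25} (no linear forms) together with the homogeneity of $I(\mathcal{A})$.
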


\begin{remark}\label{rm38}
(i) Suppose that the field $K$ is algebraically closed. Then \cite[Theorem 2.3]{ST} asserts the equivalence of conditions (i) and (iv) of Corollary \ref{co37}, and the idea of the proof is to show that
\begin{equation}\label{eq32}
 \codim(\mathbf{V}(I_{\langle2\rangle})\cap(K^*)^n)=\dim_K F_2,
\end{equation}
where $I_{\langle2\rangle}:=I_{\langle 2\rangle}(\mathcal{A})$ and $F_2$ is the $K$-subspace of $F(\mathcal{A})$ spanned by the length 3 relations. In order to prove \eqref{eq32}, the authors of \cite{ST} have given a nice argument for the crucial fact that
$$\rank \mathbf{J}_p(I_{\langle2\rangle})=\dim_K F_2$$
 for every point $p\in \mathbf{V}(I_{\langle 2\rangle})\cap(K^*)^{n}$, where $\mathbf{J}_p(I_{\langle2\rangle})$ is the Jacobian matrix of $I_{\langle2\rangle}$ at $p$. However, to complete the proof of \cite[Theorem 2.3]{ST} one needs to show further that
\begin{equation}\label{eq31}
 \codim(\mathbf{V}(I_{\langle 2\rangle})\cap(K^*)^{n})=\rank \mathbf{J}_p(I_{\langle2\rangle}).
\end{equation}
Recall the following well-known facts (see, e.g., \cite[Section 9.6]{CLO}):
 $$\begin{aligned}
    \dim(\mathbf{V}(I_{\langle 2\rangle})\cap(K^*)^{n})&=\max\{\dim_p\mathbf{V}(I_{\langle 2\rangle})\mid p\in\mathbf{V}(I_{\langle 2\rangle})\cap(K^*)^{n}\}\\
&=\max\{\dim_K T_p(\mathbf{V}(I_{\langle 2\rangle}))\mid p\in\mathbf{V}(I_{\langle 2\rangle})\cap(K^*)^{n}, p\ \text{is smooth}\}\\
&=\max\{n-\rank \mathbf{J}_p(\sqrt{I_{\langle 2\rangle}})\mid p\in\mathbf{V}(I_{\langle 2\rangle})\cap(K^*)^{n}, p\ \text{is smooth}\},
   \end{aligned}$$
 where $T_p(\mathbf{V}(I_{\langle 2\rangle}))$ denotes the tangent space of $\mathbf{V}(I_{\langle 2\rangle})$ at $p$. Thus \eqref{eq31} will follow if one can show that
\begin{equation}\label{eq33}
 \rank \mathbf{J}_p(\sqrt{I_{\langle 2\rangle}})=\rank \mathbf{J}_p(I_{\langle2\rangle})\quad \text{for every}\quad p\in \mathbf{V}(I_{\langle 2\rangle})\cap(K^*)^{n}.
\end{equation}
 This fact is, however, not so obvious. It should be mentioned here that $I_{\langle 2\rangle}$ is not a radical ideal in general (see Example \ref{ex51}), and that \eqref{eq33} is not true if $I_{\langle 2\rangle}$ is replaced by an arbitrary ideal of $S$, e.g., $I=(x_1^2)$.

(ii) One may prove \eqref{eq33}, and thereby complete the original proof of \cite[Theorem 2.3]{ST}, by using Corollary \ref{co33} and the following simple observation.

\begin{claim}
 Let $I,I'$ be ideals in $S$. Then $\rank \mathbf{J}_p(I:I')=\rank \mathbf{J}_p(I)$ for every $p\in \mathbf{V}(I)-\mathbf{V}(I').$
\end{claim}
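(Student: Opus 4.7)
The plan is to identify $\rank \mathbf{J}_p(J)$ with the dimension of the $K$-subspace $\mathcal{G}_p(J) := \{\nabla h(p) \mid h \in J\} \subseteq K^n$ and then show the equality $\mathcal{G}_p(I) = \mathcal{G}_p(I:I')$. The identification holds whenever $p \in \mathbf{V}(J)$: writing any $h \in J$ as $h = \sum a_i h_i$ in terms of a generating set $\{h_i\}$ of $J$, the product rule together with $h_i(p) = 0$ yields $\nabla h(p) = \sum a_i(p)\, \nabla h_i(p)$, which lies in the row span of the Jacobian matrix. Both $p \in \mathbf{V}(I)$ and $p \in \mathbf{V}(I:I')$ hold: the latter because, for $f \in I:I'$ and any $g \in I'$ with $g(p) \neq 0$ (which exists by $p \notin \mathbf{V}(I')$), the relation $fg \in I$ gives $f(p)g(p) = 0$ and hence $f(p) = 0$.

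One containment is immediate. From $I \subseteq I:I'$ (for $f \in I$, $fI' \subseteq I$), every generator of $I$ belongs to $I:I'$, so $\mathcal{G}_p(I) \subseteq \mathcal{G}_p(I:I')$. For the reverse, pick $g \in I'$ with $g(p) \neq 0$ as above. Given any $f \in I:I'$, the product rule applied to $fg$, together with $f(p) = 0$, gives
$$\nabla(fg)(p) = g(p)\,\nabla f(p) + f(p)\,\nabla g(p) = g(p)\,\nabla f(p).$$
Since $fg \in I$, the left-hand side lies in $\mathcal{G}_p(I)$; dividing by the nonzero scalar $g(p)$ yields $\nabla f(p) \in \mathcal{G}_p(I)$, so $\mathcal{G}_p(I:I') \subseteq \mathcal{G}_p(I)$.

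There is no substantive obstacle here—the entire argument reduces to a single application of Leibniz's rule—and the hypothesis $p \notin \mathbf{V}(I')$ enters precisely to supply the multiplier $g$ with $g(p) \neq 0$. Without it the claim can fail, as the toy example $I = (x_1^2)$, $I' = (x_1)$, $p = 0$ shows: $I:I' = (x_1)$, giving $\rank \mathbf{J}_0(I) = 0$ but $\rank \mathbf{J}_0(I:I') = 1$.
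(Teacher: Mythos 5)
Your proof is correct and takes essentially the same approach as the paper: pick $g \in I'$ with $g(p)\neq 0$, and use Leibniz's rule on $fg \in I$ together with $f(p)=0$ to see that $\nabla f(p)$ lies (up to the nonzero scalar $g(p)$) in the row span of $\mathbf{J}_p(I)$. The only cosmetic difference is that you package the row span as the gradient space $\mathcal{G}_p(\cdot)$ and state both containments explicitly, whereas the paper leaves the trivial inclusion $\mathcal{G}_p(I)\subseteq\mathcal{G}_p(I:I')$ implicit.
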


\begin{proof}
 Let $p\in \mathbf{V}(I)-\mathbf{V}(I').$ Then there is $g\in I'$ such that $g(p)\ne 0$. For any $f\in I:I'$, $gf\in I$. So we may write
$gf=\sum_{j=1}^mg_jf_j$ with $g_j\in S, f_j\in I.$ Since $f_j(p)=0$ for $j=1,\ldots,m$, we deduce that $f(p)=0$ and
$$g(p)\frac{\partial f}{\partial x_i}(p)=\sum_{j=1}^m g_j(p)\frac{\partial f_j}{\partial x_i}(p),\ i=1,\ldots,n.$$
Thus, $\rank \mathbf{J}_p(I:I')=\rank \mathbf{J}_p(I)$.
\end{proof}

Now by Corollary \ref{co33} and the above claim we obtain
$$\rank \mathbf{J}_p(\sqrt{I_{\langle 2\rangle}})=\rank \mathbf{J}_p(\sqrt{I_{\langle 2\rangle}}:x_{[n]})=\rank \mathbf{J}_p({I_{\langle 2\rangle}}:x_{[n]})=\rank \mathbf{J}_p(I_{\langle2\rangle})$$
for every $p\in \mathbf{V}(I_{\langle 2\rangle})\cap(K^*)^{n}.$
\end{remark}

\section{Minimal prime ideals of subideals of the Orlik--Terao ideal}

We keep the notation of the previous sections. Let $\mathcal{A}$ be a central arrangement. In this section we describe, for a given subset $\mathfrak{R}$ of the relation space $F(\mathcal{A})$, the minimal prime ideals of the ideal $J(\mathfrak{R})$. As an application, we give an instance in which $J(\mathfrak{R})$ is a prime ideal.

Recall from Corollary \ref{co33} that $I(\mathfrak{R})$ is the only associated prime ideal of $J(\mathfrak{R})$ which does not contain any variables. To determine other minimal prime ideals of $J(\mathfrak{R})$ we need some notation. We call a subset $\Gamma$ of $[n]$ an $\mathfrak{R}$-\emph{cover} if either $|\Gamma\cap \supp(r)|=0$ or $\geq 2$ for all $r\in\mathfrak{R}.$ The set of all $\mathfrak{R}$-covers is denoted by $co(\mathfrak{R})$. Suppose $\Gamma\in co(\mathfrak{R})$. We set
$$\mathfrak{R}_0(\Gamma)=\{r\in \mathfrak{R}\mid\ \  |\Gamma\cap \supp(r)|=0\}\quad \text{and}\quad \mathfrak{R}_+(\Gamma)=\mathfrak{R}-\mathfrak{R}_0(\Gamma).$$
Consider the ideal $Q_\Gamma(\mathfrak{R})=(x_i\mid i\in \Gamma)+I(\mathfrak{R}_0(\Gamma))$ in $S$. Recall that, as shown in Proposition \ref{pr26}, $I(\mathfrak{R}_0(\Gamma))=(\iota(r)\mid r\in K\mathfrak{R}_0(\Gamma))$ is the Orlik--Terao ideal of the arrangement $\mathcal{A}({R}_0(\Gamma))$. Thus $Q_\Gamma(\mathfrak{R})$ is a prime ideal because it is the sum of two prime ideals in disjoint sets of variables. Now if $r\in \mathfrak{R}_+(\Gamma)$, then $|\Gamma\cap \supp(r)|\geq 2$, and so $\iota(r)\in (x_i\mid i\in \Gamma)$. This yields $J(\mathfrak{R})\subseteq Q_\Gamma(\mathfrak{R})$. Note that by Proposition \ref{pr26},
$$\codim Q_\Gamma(\mathfrak{R})=\codim (x_i\mid i\in \Gamma)+\codim I(\mathfrak{R}_0(\Gamma)) =|\Gamma|+\dim_K K\mathfrak{R}_0(\Gamma).$$

The main result of this section embeds the minimal prime ideals of $J(\mathfrak{R})$ in a finite set, as follows.

\begin{theorem}\label{th41}
 Let $\mathfrak{R}$ be a subset of $F(\mathcal{A})$. Then the set of minimal prime ideals of $J(\mathfrak{R})$ coincides with the set of minimal elements of $\{Q_\Gamma(\mathfrak{R})\mid\Gamma\in co(\mathfrak{R})\}$. Thus, in particular,
$$\begin{aligned}
\sqrt{J(\mathfrak{R})}&=\bigcap_{\Gamma\in co(\mathfrak{R})}Q_\Gamma(\mathfrak{R}),\  \text{and}\\
\codim J(\mathfrak{R})&=\min \{|\Gamma|+\dim_K K\mathfrak{R}_0(\Gamma)\mid\Gamma\in co(\mathfrak{R})\}.
\end{aligned}$$
\end{theorem}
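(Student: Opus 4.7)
The discussion preceding the theorem already supplies one direction: each $Q_\Gamma(\mathfrak{R})$ with $\Gamma\in co(\mathfrak{R})$ is prime and contains $J(\mathfrak{R})$. Hence it suffices to show that every minimal prime $P$ of $J(\mathfrak{R})$ equals some $Q_\Gamma(\mathfrak{R})$. Once this is done, the claimed coincidence of the minimal primes of $J(\mathfrak{R})$ with the minimal members of $\{Q_\Gamma(\mathfrak{R}):\Gamma\in co(\mathfrak{R})\}$ is automatic, and the radical and codimension formulas then follow from what has already been recorded in the excerpt.

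Given a minimal prime $P$, the natural candidate is $\Gamma=\{i\in[n]:x_i\in P\}$. First I would verify $\Gamma\in co(\mathfrak{R})$: suppose on the contrary that some $r\in\mathfrak{R}$ has $\Gamma\cap\supp(r)=\{i_0\}$. Split $\iota(r)=a_{i_0}x_{\supp(r)-i_0}+x_{i_0}h$; since both $\iota(r)$ and $x_{i_0}$ lie in $P$, primality gives $a_{i_0}x_{\supp(r)-i_0}\in P$, forcing some $x_j$ with $j\in\supp(r)-i_0\subseteq[n]-\Gamma$ to lie in $P$, contradicting the choice of $\Gamma$.

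Next, to pin $P$ down exactly, I would work modulo $\Gamma$. Using $|\Gamma\cap\supp(r)|\geq 2$ for $r\in\mathfrak{R}_+(\Gamma)$, every monomial of $\iota(r)$ is divisible by some $x_i$ with $i\in\Gamma$, so
\[
J(\mathfrak{R})+(x_i\mid i\in\Gamma)=J(\mathfrak{R}_0(\Gamma))+(x_i\mid i\in\Gamma).
\]
Setting $\bar S=K[x_i:i\in[n]-\Gamma]$ and letting $\bar P$ denote the image of $P$, a short argument (lift any prime below $\bar P$ containing $J(\mathfrak{R}_0(\Gamma))\bar S$ back to $S$ and use minimality of $P$ over $J(\mathfrak{R})$) shows $\bar P$ is a minimal prime of $J(\mathfrak{R}_0(\Gamma))\bar S$ in $\bar S$, and by definition of $\Gamma$ it contains no variable of $\bar S$. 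Viewing $\mathfrak{R}_0(\Gamma)$ as a subset of $F(\mathcal{A}^*)$ for the sub-arrangement $\mathcal{A}^*=\{H_i:i\in[n]-\Gamma\}$, Corollary \ref{co33} applied \emph{in the ring $\bar S$} yields $I(\mathfrak{R}_0(\Gamma))\bar S=J(\mathfrak{R}_0(\Gamma))\bar S:x_{[n]-\Gamma}$. Since $x_{[n]-\Gamma}\notin\bar P$, this forces $I(\mathfrak{R}_0(\Gamma))\bar S\subseteq\bar P$; primality of $I(\mathfrak{R}_0(\Gamma))\bar S$ (Theorem \ref{th25}) together with minimality of $\bar P$ then yields $\bar P=I(\mathfrak{R}_0(\Gamma))\bar S$, i.e., $P=Q_\Gamma(\mathfrak{R})$.

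The main obstacle I foresee is the clean application of Corollary \ref{co33} in the smaller polynomial ring $\bar S$: one must identify $\mathfrak{R}_0(\Gamma)$ with relations of the sub-arrangement $\mathcal{A}^*$ so that the ideals $J$ and $I$ computed intrinsically in $\bar S$ coincide with the extensions of $J(\mathfrak{R}_0(\Gamma))$ and $I(\mathfrak{R}_0(\Gamma))$ to $\bar S$, and one must carefully verify that $\bar P$ survives as a minimal prime after the quotient. Once those bookkeeping checks are in place, the intersection formula $\sqrt{J(\mathfrak{R})}=\bigcap_{\Gamma\in co(\mathfrak{R})}Q_\Gamma(\mathfrak{R})$ is immediate, and the codimension formula follows from the already-established equality $\codim Q_\Gamma(\mathfrak{R})=|\Gamma|+\dim_K K\mathfrak{R}_0(\Gamma)$.
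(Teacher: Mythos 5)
Your proposal is correct and follows essentially the same route as the paper: identify $\Gamma=\{i: x_i\in P\}$, show $\Gamma\in co(\mathfrak{R})$ via the $\iota(r)=a_{i_0}x_{\supp(r)-i_0}+x_{i_0}h$ split, and then pin down $P$ using Corollary~\ref{co33} to recognize the prime component away from the variables. The only difference is cosmetic: you pass to the quotient ring $\bar S=K[x_j: j\in[n]-\Gamma]$ and apply Corollary~\ref{co33} to the subarrangement $\mathcal{A}^*$, whereas the paper stays inside $S$, writes $Q=(x_i\mid i\in\Gamma)+Q'$ with $Q'$ generated in $\bar S$, and applies Corollary~\ref{co33} directly to $\mathfrak{R}_0(\Gamma)\subseteq F(\mathcal{A})$ --- which sidesteps the bookkeeping concern you raise at the end.
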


\begin{proof}
 It suffices to prove that every minimal prime ideal of $J(\mathfrak{R})$ is of the form $Q_\Gamma(\mathfrak{R})$ for some $\Gamma\in co(\mathfrak{R})$. First note that $I(\mathfrak{R})=Q_\emptyset(\mathfrak{R})$. Now suppose $Q$ is a minimal prime ideal of $J(\mathfrak{R})$ other than $I(\mathfrak{R})$. Then $Q$ contains variables, according to Corollary \ref{co33}. Let $\Gamma=\{i\in [n]\mid x_i\in Q\}$. We show that $\Gamma\in co(\mathfrak{R})$. Indeed, assume on the contrary that there exists $r\in \mathfrak{R}$ with $|\Gamma\cap \supp(r)|=1$, say $\Gamma\cap \supp(r)=\{j\}$. Writing $r=a_jx_j+\sum_{i\in \supp(r)-j}a_ix_i$ with $a_j\ne0$, one gets
  \begin{equation}\label{eq41}
  \iota(r)=a_jx_{\supp(r)-j}+x_j(\sum_{i\in \supp(r)-j}a_ix_{\supp(r)-\{i,j\}}).
  \end{equation}
  It follows that $x_{\supp(r)-j}\in Q$ since $x_j\in Q$ and $\iota(r)\in J(\mathfrak{R})\subseteq Q$. Thus, $x_i\in Q$ and hence $i\in \Gamma$ for some $i\in \supp(r)-j$. This is a contradiction to the assumption above. So we must have $\Gamma\in co(\mathfrak{R})$.

   Let us now prove $Q=Q_\Gamma(\mathfrak{R}).$ We may write $Q=(x_i\mid i\in \Gamma)+Q'$, where $Q'$ is an ideal generated by polynomials in $K[x_j\mid j\in [n]-\Gamma]$. Evidently, $Q'$ is a prime ideal containing no variables. From $J(\mathfrak{R})\subseteq Q$ it follows that $J(\mathfrak{R}_0(\Gamma))=(\iota(r)\mid r\in\mathfrak{R}_0(\Gamma))\subseteq Q'$. If there was some prime ideal $Q_1'$ with $J(\mathfrak{R}_0(\Gamma))\subseteq Q_1'\subsetneq Q'$, then $Q_1=(x_i\mid i\in \Gamma)+Q_1'$ would be a prime such that $J(\mathfrak{R}(\Gamma))\subseteq Q_1\subsetneq Q$. This contradicts the minimality of $Q$. Thus $Q'$ must be a minimal prime ideal of $J(\mathfrak{R}_0(\Gamma))$. Now by Corollary \ref{co33}, $Q'=I(\mathfrak{R}_0(\Gamma))$. Therefore, $Q=Q_\Gamma(\mathfrak{R}).$
\end{proof}

Because of the above theorem, we are now looking for minimal ideals of the form $Q_\Gamma(\mathfrak{R})$ with $\Gamma\in co(\mathfrak{R})$. First note that if  $Q_{\Gamma'}(\mathfrak{R})\subset Q_{\Gamma}(\mathfrak{R})$ for some $\Gamma,\Gamma'\in co(\mathfrak{R})$, then, \emph{a priori}, $\Gamma'\subset\Gamma$. Therefore, to check the minimality of $Q_\Gamma(\mathfrak{R})$ one only has to compare it with the ideals $Q_{\Gamma'}(\mathfrak{R})$, where $\Gamma'\subset\Gamma$. For doing this, we need to verify whether $I(\mathfrak{R}_0(\Gamma'))\not\subset Q_\Gamma(\mathfrak{R})$, or equivalently, whether $\iota(r)\not\in Q_\Gamma(\mathfrak{R})$ for some $r\in \mathcal{C}(K\mathfrak{R}_0(\Gamma'))$ (see Proposition \ref{pr26}). We can reduce slightly the number of verifications by a simple observation. We say that a relation $r$ is \emph{induced} from $\mathfrak{R}$ if there exist $r_i\in\mathfrak{R}$ and $a_i\in K$ for $i=1,\ldots,m$ such that
\begin{equation}\label{eq42}
 r=\sum_{i=1}^m a_ir_i,\quad\text{and}\quad |\supp(r_i)\cap(\bigcup_{j=1}^{i-1}\supp(r_j))|\leq1\ \text{for}\ i=2,\ldots,m.
\end{equation}

\begin{lemma}\label{lm42}
 If $r$ is induced from $\mathfrak{R}$, then $\iota(r)\in J(\mathfrak{R})$.
\end{lemma}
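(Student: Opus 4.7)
The plan is to prove this by induction on $m$. The base case $m=1$ is trivial, since $\iota(a_1 r_1) = a_1 \iota(r_1) \in J(\mathfrak{R})$ by the definition of $J(\mathfrak{R})$. For the inductive step, I would set $s = \sum_{i<m} a_i r_i$; this is itself induced from $\mathfrak{R}$ via $r_1,\ldots,r_{m-1}$, so the inductive hypothesis gives $\iota(s) \in J(\mathfrak{R})$. Because $\supp(s) \subseteq \bigcup_{j<m}\supp(r_j)$, the induced condition on $r_m$ forces $|\supp(s) \cap \supp(r_m)| \leq 1$, and I split $r = s + a_m r_m$ into three cases according to this intersection and whether the coefficient of the common variable cancels in $r$.

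The first two cases are handled by a single computation. If $\supp(s) \cap \supp(r_m) = \emptyset$, then $\Lambda(r) = \Lambda(s)\Lambda(r_m)$; if the intersection is $\{k\}$ with coefficients $\beta,\gamma$ of $x_k$ in $s,r_m$ respectively and $\beta + a_m\gamma \neq 0$, then $k \in \supp(r)$ and $\Lambda(r) = \Lambda(s)\Lambda(r_m)/x_k$. In either case, evaluating $r(x_1^{-1},\ldots,x_n^{-1}) = s(x^{-1}) + a_m r_m(x^{-1})$ through Lemma~\ref{lm22} and clearing by $\Lambda(r)$ expresses $\iota(r)$ as $\Lambda(r_m)\iota(s) + a_m\Lambda(s)\iota(r_m)$ in the first case and as $x_{\supp(r_m)-k}\iota(s) + a_m x_{\supp(s)-k}\iota(r_m)$ in the second, both manifestly in $J(\mathfrak{R})$.

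The main obstacle is the remaining subcase, where $\supp(s) \cap \supp(r_m) = \{k\}$ and $\beta + a_m\gamma = 0$, so $k \notin \supp(r)$. The analogous clearing-denominators calculation here produces only $x_k \iota(r) = x_{\supp(r_m)-k}\iota(s) + a_m x_{\supp(s)-k}\iota(r_m) \in J(\mathfrak{R})$, which is strictly weaker than $\iota(r) \in J(\mathfrak{R})$, since $x_k$ need not be a non-zero-divisor modulo $J(\mathfrak{R})$. To bypass this, I would set $s' = s - \beta x_k$, $r_m' = r_m - \gamma x_k$ and establish the bilinear identity
\[
\gamma\,\iota(r) \;=\; \iota(s')\,\iota(r_m) \;-\; \iota(r_m')\,\iota(s),
\]
by substituting the decompositions $\iota(s) = \beta x_{\supp(s)-k} + x_k\iota(s')$ and $\iota(r_m) = \gamma x_{\supp(r_m)-k} + x_k\iota(r_m')$ into the right-hand side: the two copies of $x_k\iota(s')\iota(r_m')$ that appear cancel by commutativity, and using $\beta = -a_m\gamma$, what remains matches $\gamma\iota(r) = \gamma\bigl[x_{\supp(r_m)-k}\iota(s') + a_m x_{\supp(s)-k}\iota(r_m')\bigr]$, which is the disjoint-support formula applied to $r = s' + a_m r_m'$. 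Since both $\iota(s)$ and $\iota(r_m)$ lie in $J(\mathfrak{R})$, each product on the right-hand side is in $J(\mathfrak{R})$, and dividing by the nonzero scalar $\gamma$ completes the induction.
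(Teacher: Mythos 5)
Your proof is correct and follows essentially the same strategy as the paper: induction that reduces to combining two relations $s$ and $r_m$ with $|\supp(s)\cap\supp(r_m)|\leq 1$, then a case split on whether the supports are disjoint and, if not, whether the coefficient at the common variable cancels, in each case writing $\iota(r)$ as an explicit $S$-linear combination of $\iota(s)$ and $\iota(r_m)$. (As a side note, your bilinear identity $\gamma\iota(r)=\iota(s')\iota(r_m)-\iota(r_m')\iota(s)$ in the cancellation case is correct; the paper's corresponding displayed formula has a sign typo, with a $+$ where a $-$ should appear between the two products.)
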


\begin{proof}
 By induction we may assume that $r$ has the representation \eqref{eq42} in which $m=2$. Let $\alpha=\supp(r_1)\cap\supp(r_2)$. By the assumption, either $\alpha=\emptyset$ or $\alpha=\{j\}$ for some $j\in [n]$. We then have
$$\supp(r_1)\cup\supp(r_2)-j\subseteq \supp(r)\subseteq \supp(r_1)\cup\supp(r_2).$$
If $\supp(r)= \supp(r_1)\cup\supp(r_2)$, then it follows immediately from the definition of the map $\iota$ that $$\iota(r)=a_1x_{\supp(r_2)-\alpha}\iota(r_1)+a_2x_{\supp(r_1)-\alpha}\iota(r_2).$$
Otherwise, when $\supp(r)= \supp(r_1)\cup\supp(r_2)-j$, we may write
$$a_1r_1=ax_j+\sum_{i\in\supp(r_1)-j}b_ix_i,\quad a_2r_2=-ax_j+\sum_{k\in\supp(r_2)-j}c_kx_k,$$
where $a,b_i,c_k\in K^*$. In this case,
$$\iota(r)=a^{-1}\Big(a_1\iota(r_1)\sum_{k\in\supp(r_2)-j}c_kx_{\supp(r_2)-\{j,k\}}+a_2\iota(r_2)\sum_{i\in\supp(r_1)-j}b_ix_{\supp(r_1)-\{j,i\}}\Big).$$
Thus we always get $\iota(r)\in J(\mathfrak{R})$, as claimed.
\end{proof}

Now denote by $n\mathcal{C}(K\mathfrak{R}_0(\Gamma'))$ the subset of $\mathcal{C}(K\mathfrak{R}_0(\Gamma'))$ consisting of the relations which are not induced from $\mathfrak{R}_0(\Gamma')$. Then from Lemma \ref{lm42} (and Proposition \ref{pr26}) we have
$$I(\mathfrak{R}_0(\Gamma'))=J(\mathfrak{R}_0(\Gamma'))+\big(\iota(r)\mid r\in n\mathcal{C}(K\mathfrak{R}_0(\Gamma'))\big).$$
Note that $J(\mathfrak{R}_0(\Gamma'))\subseteq J(\mathfrak{R})\subseteq Q_\Gamma(\mathfrak{R})$, so $I(\mathfrak{R}_0(\Gamma'))\not\subset Q_\Gamma(\mathfrak{R})$ if and only if there exists $r\in n\mathcal{C}(K\mathfrak{R}_0(\Gamma'))$ such that $\iota(r)\not\in Q_\Gamma(\mathfrak{R})$. For checking the last condition, we will use the following.

\begin{lemma}\label{lm43}
 Let $r\in F(\mathcal{A})$ and $\Gamma\in co(\mathfrak{R})$. Then $\iota(r)\not\in Q_\Gamma(\mathfrak{R})$ if and only if $r\not\in K\mathfrak{R}_0(\Gamma)$ and $|\Gamma\cap \supp(r)|\leq 1$.
\end{lemma}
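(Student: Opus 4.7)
The statement is an equivalence, so the plan is to establish each direction separately.

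For the forward direction I would prove the contrapositive: if $r\in K\mathfrak{R}_0(\Gamma)$ or $|\Gamma\cap\supp(r)|\geq 2$, then $\iota(r)\in Q_\Gamma(\mathfrak{R})$. The first case is immediate from Proposition~\ref{pr26}, which gives $\iota(r)\in I(\mathfrak{R}_0(\Gamma))\subseteq Q_\Gamma(\mathfrak{R})$. The second case follows by expanding $\iota(r)=\sum_{i\in\supp(r)}a_ix_{\supp(r)-i}$ and noting that, because $|\Gamma\cap\supp(r)|\geq 2$, removing a single index from $\supp(r)$ still leaves at least one element of $\Gamma\cap\supp(r)$ inside $\supp(r)-i$; every monomial is therefore divisible by some $x_k$ with $k\in\Gamma$, and so $\iota(r)\in(x_k\mid k\in\Gamma)\subseteq Q_\Gamma(\mathfrak{R})$.

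For the reverse direction, I would suppose $r\notin K\mathfrak{R}_0(\Gamma)$ and $|\Gamma\cap\supp(r)|\leq 1$, and assume for contradiction that $\iota(r)\in Q_\Gamma(\mathfrak{R})$. The central tool is the ring homomorphism $\pi:S\to S$ sending $x_i\mapsto 0$ for $i\in\Gamma$ and fixing the remaining variables. Since every generator $\iota(\rho)$ of $I(\mathfrak{R}_0(\Gamma))$ (with $\rho\in K\mathfrak{R}_0(\Gamma)$) lies in $K[x_j\mid j\in[n]-\Gamma]$ and is therefore fixed by $\pi$, one obtains $\pi(Q_\Gamma(\mathfrak{R}))\subseteq I(\mathfrak{R}_0(\Gamma))$, hence $\pi(\iota(r))\in I(\mathfrak{R}_0(\Gamma))$. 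When $|\Gamma\cap\supp(r)|=0$, the element $\iota(r)$ already lies in $K[x_j\mid j\in[n]-\Gamma]$, so $\pi$ fixes it and $\iota(r)\in I(\mathfrak{R}_0(\Gamma))=J(\mathfrak{R}_0(\Gamma)):x_{[n]}$ by Corollary~\ref{co33}; Lemma~\ref{lm32}(iii)$\Leftrightarrow$(iv), applied with $\mathfrak{R}$ replaced by $\mathfrak{R}_0(\Gamma)\subseteq F(\mathcal{A})$, then yields $r\in K\mathfrak{R}_0(\Gamma)$, contradicting the hypothesis. When $\Gamma\cap\supp(r)=\{j\}$, the expansion~\eqref{eq41} from the proof of Theorem~\ref{th41} gives $\iota(r)=a_jx_{\supp(r)-j}+x_j(\cdots)$ with $a_j\neq 0$; applying $\pi$ yields $a_jx_{\supp(r)-j}\in I(\mathfrak{R}_0(\Gamma))$. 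Since $r\neq 0$ forces $|\supp(r)|\geq 2$ (no variable lies in $F(\mathcal{A})$), this is a nonempty product of variables in the prime ideal $I(\mathfrak{R}_0(\Gamma))$, which by Theorem~\ref{th25} contains no variables---a contradiction.

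The only real subtlety is the appeal to Lemma~\ref{lm32} and Corollary~\ref{co33} with $\mathfrak{R}_0(\Gamma)$ in place of $\mathfrak{R}$ in the first case of the reverse direction; this is legitimate since both results apply to any subset of $F(\mathcal{A})$. Once that observation is made, the two subcases of the reverse direction proceed by fundamentally different mechanisms: the $|\Gamma\cap\supp(r)|=0$ case reduces via the projection $\pi$ to the earlier colon-ideal characterization of $K\mathfrak{R}_0(\Gamma)$, whereas the $|\Gamma\cap\supp(r)|=1$ case isolates an explicit monomial lying in a prime ideal that contains no variables.
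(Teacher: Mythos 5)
Your proof is correct and follows essentially the same strategy as the paper's: the forward (contrapositive) direction is the easy inclusion, and the reverse direction splits into the cases $|\Gamma\cap\supp(r)|=0$ (reduced to Lemma~\ref{lm32} via $I(\mathfrak{R}_0(\Gamma))=J(\mathfrak{R}_0(\Gamma)):x_{[n]}$) and $|\Gamma\cap\supp(r)|=1$ (where the expansion~\eqref{eq41} produces a variable-monomial forced into a prime ideal containing no variables). Your explicit introduction of the retraction $\pi$ killing the variables indexed by $\Gamma$ is a clean way to make the disjoint-variable structure of $Q_\Gamma(\mathfrak{R})$ transparent; the paper performs the same reduction more tersely without naming the map.
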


\begin{proof}
 The ``only if'' part is clear. For the ``if'' part, assuming $\iota(r)\in Q_\Gamma(\mathfrak{R})$, we will show that if $|\Gamma\cap \supp(r)|\leq 1$, then $r\in K\mathfrak{R}_0(\Gamma)$. We first argue that the case $|\Gamma\cap \supp(r)|= 1$ is impossible.  Indeed, if $\Gamma\cap \supp(r)=\{j\}$, then we have an expression of $\iota(r)$ as in \eqref{eq41}. From $x_j,\iota(r)\in Q_\Gamma(\mathfrak{R})$ it follows that $x_{\supp(r)-j}\in Q_\Gamma(\mathfrak{R})$. But this cannot happen because $\Gamma\cap (\supp(r)-j)=\emptyset$ and $I(\mathfrak{R}_0(\Gamma))$ is a prime ideal containing no variables. Thus we must have $|\Gamma\cap \supp(r)|=0$. In this case, $\iota(r)\in I(\mathfrak{R}_0(\Gamma))=J(\mathfrak{R}_0(\Gamma)):x_{[n]}$, and so $r\in K\mathfrak{R}_0(\Gamma)$ by Lemma \ref{lm32}.
\end{proof}

From Lemma \ref{lm43} and the discussions preceding it we immediately obtain the following criterion for $Q_\Gamma(\mathfrak{R})$ to be a minimal prime ideal of $J(\mathfrak{R}).$

\begin{proposition}\label{pr44}
 Let $\Gamma\in co(\mathfrak{R})$. Then the following conditions are equivalent:
\begin{enumerate}
 \item
$Q_\Gamma(\mathfrak{R})$ is a minimal prime ideal of $J(\mathfrak{R})$;

\item
$Q_{\Gamma'}(\mathfrak{R})\not\subset Q_{\Gamma}(\mathfrak{R})$ for every $\Gamma'\in co(\mathfrak{R})$ with $\Gamma'\subset\Gamma$;

\item
there exists $r\in n\mathcal{C}(K\mathfrak{R}_0(\Gamma'))-\mathcal{C}(K\mathfrak{R}_0(\Gamma))$ such that $|\Gamma\cap \supp(r)|\leq 1$ for every $\Gamma'\in co(\mathfrak{R})$ with $\Gamma'\subset\Gamma$.
\end{enumerate}
 \end{proposition}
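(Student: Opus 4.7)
The plan is to derive the proposition from Theorem \ref{th41}, Lemma \ref{lm43}, and the two reductions worked out in the paragraphs immediately preceding the statement.

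First I would establish the equivalence (i) $\Leftrightarrow$ (ii). By Theorem \ref{th41}, the minimal primes of $J(\mathfrak{R})$ are precisely the minimal elements of $\{Q_{\Gamma''}(\mathfrak{R})\mid \Gamma''\in co(\mathfrak{R})\}$, and the observation recorded just above the proposition states that any inclusion $Q_{\Gamma''}(\mathfrak{R})\subset Q_\Gamma(\mathfrak{R})$ forces $\Gamma''\subset\Gamma$. Thus minimality of $Q_\Gamma(\mathfrak{R})$ amounts to the non-inclusion $Q_{\Gamma'}(\mathfrak{R})\not\subset Q_\Gamma(\mathfrak{R})$ for every $\Gamma'\in co(\mathfrak{R})$ strictly contained in $\Gamma$, which is exactly (ii).

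For (ii) $\Leftrightarrow$ (iii), I would fix $\Gamma'\in co(\mathfrak{R})$ with $\Gamma'\subset\Gamma$ and analyze when $Q_{\Gamma'}(\mathfrak{R})\subset Q_\Gamma(\mathfrak{R})$. Because $\Gamma'\subset\Gamma$ gives $(x_i\mid i\in\Gamma')\subseteq Q_\Gamma(\mathfrak{R})$, this inclusion is equivalent to $I(\mathfrak{R}_0(\Gamma'))\subseteq Q_\Gamma(\mathfrak{R})$. Combining Lemma \ref{lm42} with Proposition \ref{pr26} gives
\[
 I(\mathfrak{R}_0(\Gamma'))=J(\mathfrak{R}_0(\Gamma'))+\bigl(\iota(r)\mid r\in n\mathcal{C}(K\mathfrak{R}_0(\Gamma'))\bigr),
\]
and since $J(\mathfrak{R}_0(\Gamma'))\subseteq J(\mathfrak{R})\subseteq Q_\Gamma(\mathfrak{R})$, the inclusion fails precisely when some $r\in n\mathcal{C}(K\mathfrak{R}_0(\Gamma'))$ has $\iota(r)\not\in Q_\Gamma(\mathfrak{R})$. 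Applying Lemma \ref{lm43}, this last condition is equivalent to the existence of $r\in n\mathcal{C}(K\mathfrak{R}_0(\Gamma'))$ with $r\not\in K\mathfrak{R}_0(\Gamma)$ and $|\Gamma\cap\supp(r)|\leq 1$.

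The remaining and most subtle step, which I expect to be the main obstacle, is to rewrite the condition $r\not\in K\mathfrak{R}_0(\Gamma)$ supplied by Lemma \ref{lm43} as the circuit-membership condition $r\not\in \mathcal{C}(K\mathfrak{R}_0(\Gamma))$ appearing in (iii). The key observation is that $\mathfrak{R}_0(\Gamma)\subseteq \mathfrak{R}_0(\Gamma')$ (because $\Gamma'\subset\Gamma$), and hence $K\mathfrak{R}_0(\Gamma)\subseteq K\mathfrak{R}_0(\Gamma')$. If $r\in\mathcal{C}(K\mathfrak{R}_0(\Gamma'))$ also belongs to $K\mathfrak{R}_0(\Gamma)$, then $\supp(r)$ is dependent in the smaller matroid attached to $\mathcal{A}(\mathfrak{R}_0(\Gamma))$; were it not minimal there, a strictly smaller dependency would lift to $K\mathfrak{R}_0(\Gamma')$ and contradict $r\in\mathcal{C}(K\mathfrak{R}_0(\Gamma'))$. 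Hence $r\in\mathcal{C}(K\mathfrak{R}_0(\Gamma))$, and the converse is trivial. Consequently, within $n\mathcal{C}(K\mathfrak{R}_0(\Gamma'))$ the conditions $r\not\in K\mathfrak{R}_0(\Gamma)$ and $r\not\in\mathcal{C}(K\mathfrak{R}_0(\Gamma))$ coincide; quantifying over all $\Gamma'\subset\Gamma$ in $co(\mathfrak{R})$ then gives (ii) $\Leftrightarrow$ (iii).
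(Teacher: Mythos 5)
Your proof is correct and follows exactly the route the paper intends: the paper states the proposition "follows immediately from Lemma \ref{lm43} and the discussions preceding it" and gives no explicit argument, so your writeup simply fills in the details. The one place where you supply something the paper leaves implicit is the last step, where you verify that inside $n\mathcal{C}(K\mathfrak{R}_0(\Gamma'))$ the condition $r\notin K\mathfrak{R}_0(\Gamma)$ from Lemma \ref{lm43} is equivalent to $r\notin\mathcal{C}(K\mathfrak{R}_0(\Gamma))$ as stated in (iii); your matroid argument (a circuit of the larger matroid $K\mathfrak{R}_0(\Gamma')$ that happens to lie in the submatroid $K\mathfrak{R}_0(\Gamma)$ remains a circuit there, since any strictly smaller dependency would contradict minimality in the larger one) is exactly the right justification and closes the gap cleanly.
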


Next we give a simple example which demonstrates that the previous proposition could be very useful. Recall that a simple graph $G$ on vertex set $[m]$ and edge set $E$ defines a graphic arrangement $\mathcal{A}_G$ in $K^m$ as follows: let $y_1,\ldots,y_m$ be a basis for the dual space $(K^{m})^*$, then $\mathcal{A}_G=\{\ker(y_i-y_j)\mid\{i,j\}\in {E}\}$. It is obvious that $\mathcal{A}_G$ has $|{E}|$ hyperplanes and easy to show that $\rank(\mathcal{A}_G)= m-\omega(G)$, where $\omega(G)$ is the number of connected components of $G$.

\begin{example}\label{ex45}
 Let $G$ be the graph with labeled edges depicted in Figure \ref{fig1}(a). Consider the graphic arrangement $\mathcal{A}_G$. Since $G$ is connected and has 5 vertices and 8 edges, $$\dim_K F(\mathcal{A}_G)=8-(5-1)=4.$$ Let $r_1=x_1+x_5-x_8, r_2=x_2-x_5+x_6,r_3=x_3-x_6+x_7$, and $r_4=x_4-x_7+x_8 $ be the relations corresponding to the 4 triangles of $G$. Then $r_1,\ldots,r_4$ are linearly independent (since $r_i$ is the only relation involving $x_i$ for $i=1,\ldots,4$). Thus $\mathfrak{R}=\{r_1,\ldots,r_4\}$ is a basis for $F(\mathcal{A}_G)$, and so $\mathcal{A}_G$ is 2-formal. We will show that $J(\mathfrak{R})$ has only two minimal prime ideals, namely, $Q_\emptyset(\mathfrak{R})=I(\mathfrak{R})=I(\mathcal{A}_G)$ and $Q_{\{5,6,7,8\}}(\mathfrak{R})=(x_5,x_6,x_7,x_8)$. Indeed, the set $n\mathcal{C}(K\mathfrak{R})$ consists of only one relation $r_C$ which corresponds to the cycle $C=\{1,2,3,4\}$ of $G$. Let $\Gamma$ be a non empty $\mathfrak{R}$-cover. It is easily seen that $\Gamma$  contains either $\Gamma_0=\{5,6,7,8\}$ or at
least two edges of $C$. If $\Gamma\supset\Gamma_0$, then clearly $Q_\Gamma(\mathfrak{R})\supset Q_{\Gamma_0}(\mathfrak{R})=(x_5,x_6,x_7,x_8)$. If, otherwise, $\Gamma$  contains at least two edges of $C$, then $Q_\Gamma(\mathfrak{R})\supset Q_\emptyset(\mathfrak{R})$ by Proposition \ref{pr44}. Thus $Q_\Gamma(\mathfrak{R})$ is not a minimal prime ideal of $J(\mathfrak{R})$ for $\Gamma\ne \emptyset,\Gamma_0$. Finally, again by Proposition \ref{pr44}, one has $Q_{\emptyset}(\mathfrak{R})\not\subset Q_{\Gamma_0}(\mathfrak{R})$ since $\mathfrak{R}_0(\Gamma_0)=\emptyset$ and $\Gamma_0\cap\supp(r_C)=\emptyset$. Hence $Q_{\emptyset}(\mathfrak{R})$ and $Q_{\Gamma_0}(\mathfrak{R})$ are both minimal prime ideals of $J(\mathfrak{R})$.
\end{example}

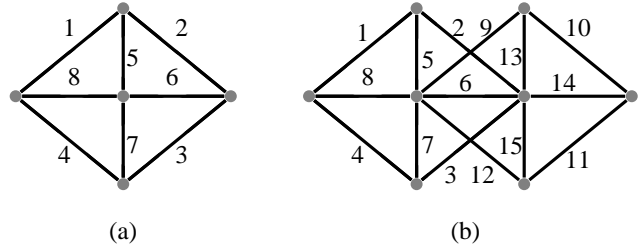
\begin{figure}[ht]

\begin{center}

\begin{tikzpicture} [scale = .13, very thick = 10mm]

  \node (n4) at (4,-3)  [Cgray] {};
  \node (n1) at (4,15) [Cgray] {};
  \node (n2) at (-7,6)  [Cgray] {};
  \node (n3) at (15,6)  [Cgray] {};
\node (n5) at (4,6)  [Cgray] {};
  \foreach \from/\to in {n4/n2,n1/n3}
    \draw[] (\from) -- (\to);
\foreach \from/\to in {n2/n1,n4/n3,n5/n2,n5/n3,n5/n1,n5/n4}
    \draw[] (\from) -- (\to);

 \node (m4) at (-1,8)  [Cwhite] {$8$};  \node (m5) at (9,8)  [Cwhite] {$6$};
   \node (m1) at (-1.5,13) [Cwhite] {$1$};
 \node (m1) at (5,10) [Cwhite] {$5$}; \node (m1) at (5,1) [Cwhite] {$7$};
  \node (m1) at (10,13) [Cwhite] {$2$};
 \node (m1) at (10,0) [Cwhite] {$3$};
\node (m1) at (-2,0) [Cwhite] {$4$};
 \foreach \from/\to in {n2/n1,n2/n5,n2/n4, n4/n3,n4/n5,n5/n1,n5/n3,n1/n3}
\draw[black][] (\from) -- (\to);

 \node (l) at (4,-8)  [Cwhite] {(a)};

  \node (n4) at (34,-3)  [Cgray] {};    \node (l) at (39,-8)  [Cwhite] {(b)};
  \node (n1) at (34,15) [Cgray] {};
  \node (n2) at (23,6)  [Cgray] {};
  \node (n3) at (45,6)  [Cgray] {};
\node (n5) at (34,6)  [Cgray] {};
  \foreach \from/\to in {n4/n2,n1/n3}
    \draw[] (\from) -- (\to);
\foreach \from/\to in {n2/n1,n4/n3,n5/n2,n5/n3,n5/n1,n5/n4}
    \draw[] (\from) -- (\to);

\node (m1) at (38.3,13) [Cwhite] {$2$}; \node (m1) at (35.2,9.8) [Cwhite] {$5$};  \node (m1) at (35.2,1) [Cwhite] {$7$};
\node (m1) at (41.1,13) [Cwhite] {$9$};  \node (m1) at (50.5,13) [Cwhite] {$10$};
\node (m1) at (50.5,-0.5) [Cwhite] {$11$};
\node (m1) at (40.7,-2) [Cwhite] {$12$};

 \node (m4) at (29,8)  [Cwhite] {$8$};
\node (m5) at (39,7.4)  [Cwhite] {$6$};
\node (m5) at (49,7.4)  [Cwhite] {$14$};
  \node (m1) at (28.5,12.5) [Cwhite] {$1$};

\node (m1) at (43.6,10) [Cwhite] {$13$}; \node (m1) at (43.6,1) [Cwhite] {$15$};


 \node (m1) at (37.5,-2) [Cwhite] {$3$};

\node (m1) at (28,0) [Cwhite] {$4$};
 \foreach \from/\to in {n2/n1,n2/n5,n2/n4, n4/n3,n4/n5,n5/n1,n5/n3,n1/n3}
\draw[black][] (\from) -- (\to);

 \node (n6) at (45,15)  [Cgray] {};
  \node (n7) at (56,6) [Cgray] {};
  \node (n8) at (45,-3)  [Cgray] {};
  \foreach \from/\to in {n6/n7,n7/n8,n5/n6,n5/n8,n3/n6,n3/n8,n3/n7}
    \draw[] (\from) -- (\to);

\end{tikzpicture}

\caption{(a) Graph $G$, (b) Graph $G_2$ consisting of $2$ copies of $G$}
\label{fig1}
\end{center}

\end{figure}

To conclude this section we give a sufficient condition for $J(\mathfrak{R})$ to be a prime ideal. We see from Equation \eqref{eq34} that $I(\mathfrak{R})$ itself is a primary component of $J(\mathfrak{R})$. Therefore, $J(\mathfrak{R})$ is a prime ideal if and only if $J(\mathfrak{R})=I(\mathfrak{R})$, or equivalently, if and only if $I(\mathfrak{R})=Q_{\emptyset}(\mathfrak{R})$ is the unique minimal prime ideal of $J(\mathfrak{R})$. By Proposition \ref{pr44}, this will be the case if $n\mathcal{C}(K\mathfrak{R})=n\mathcal{C}(K\mathfrak{R}_0(\emptyset))=\emptyset$. Thus, especially, if $\mathfrak{R}$ is finite and its elements admit an enumeration $r_1,\ldots,r_m$ such that $|\supp(r_i)\cap(\bigcup_{j=1}^{i-1}\supp(r_j))|\leq1$ for $i=2,\ldots,m$, then $J(\mathfrak{R})$ is prime. When $\mathfrak{R}$ satisfies this condition, it will be called \emph{simple}. A reformulation of simpleness, which is more convenient for an application in the next section, will be derived below.

Let $\mathfrak{R}$ be a finite set of relations such that $|\supp(r)\cap\supp(r')|\leq1$ for every distinct $r,r'\in \mathfrak{R}$. We define the \emph{intersection graph} $\mathcal{G}(\mathfrak{R})$ on vertex set $\mathfrak{R}$ as follows: $\{r,r'\}$ is an edge of $\mathcal{G}(\mathfrak{R})$ if and only if $\supp(r)\cap\supp(r')\ne\emptyset.$ Let us label the edge $\{r,r'\}$ of $\mathcal{G}(\mathfrak{R})$ with $\supp(r)\cap\supp(r')$. A cycle of $\mathcal{G}(\mathfrak{R})$ is called \emph{proper} if its edge labels are pairwise distinct. We say that $\mathcal{G}(\mathfrak{R})$ is \emph{quasi-acyclic} if it contains no proper cycles.

 Notice that the conditions of simpleness and quasi-acyclicity introduced above have their root in the study of the complete intersection property of the Orlik--Terao ideal \cite{LR}. The next lemma shows that the two conditions are equivalent.

\begin{lemma}\label{lm46}
Let $\mathfrak{R}$ be a finite subset of the space $F(\mathcal{A})$. Then $\mathfrak{R}$ is simple if and only if the intersection graph $\mathcal{G}(\mathfrak{R})$ is quasi-acyclic.
\end{lemma}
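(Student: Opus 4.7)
The plan is to recast the problem in terms of a bipartite incidence graph, which makes both implications transparent. Let $\mathfrak{R}_j=\{r\in \mathfrak{R}:j\in \supp(r)\}$ and $I=\{j\in [n]:|\mathfrak{R}_j|\geq 2\}$, and let $B$ be the bipartite graph with parts $\mathfrak{R}$ and $I$ in which $r\sim j$ whenever $j\in \supp(r)$. Since $|\supp(r)\cap \supp(r')|\leq 1$ for distinct $r,r'\in \mathfrak{R}$, every edge label of $\mathcal{G}(\mathfrak{R})$ is a singleton $\{j\}$ with $j\in I$. The key observation is that proper cycles in $\mathcal{G}(\mathfrak{R})$ correspond bijectively to cycles in $B$: a proper cycle $r_{i_1},\ldots,r_{i_k},r_{i_1}$ with pairwise distinct labels $\{j_1\},\ldots,\{j_k\}$ unfolds to the $B$-cycle $r_{i_1},j_1,r_{i_2},j_2,\ldots,r_{i_k},j_k,r_{i_1}$ of length $2k\geq 6$, and every cycle of $B$ arises this way (its length is at least $6$ precisely because the bound $|\supp(r)\cap\supp(r')|\leq 1$ rules out $4$-cycles in $B$). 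Hence $\mathcal{G}(\mathfrak{R})$ is quasi-acyclic if and only if $B$ is a forest.

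For the forward direction, assume $\mathfrak{R}$ is simple with a witnessing enumeration $r_1,\ldots,r_m$, and suppose for contradiction that $\mathcal{G}(\mathfrak{R})$ contains a proper cycle. Let $r_s$ be the vertex of largest index appearing in the cycle; its two cycle-neighbors carry indices strictly smaller than $s$, and by properness the two cycle-edges at $r_s$ have distinct singleton labels $\{j\},\{j'\}$. Both $j$ and $j'$ lie in $\supp(r_s)\cap \bigcup_{t<s}\supp(r_t)$, contradicting $|\supp(r_s)\cap \bigcup_{t<s}\supp(r_t)|\leq 1$.

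For the backward direction I induct on $|\mathfrak{R}|$ using the equivalence above; the case $|\mathfrak{R}|\leq 1$ is trivial. For $|\mathfrak{R}|\geq 2$, I claim there always exists $r\in \mathfrak{R}$ with $\deg_B(r)\leq 1$: every $j\in I$ satisfies $\deg_B(j)=|\mathfrak{R}_j|\geq 2$ by definition of $I$, so no index vertex can be a leaf of the forest $B$, and hence each non-trivial tree component of $B$ has at least one leaf lying in $\mathfrak{R}$ (while any isolated relation vertex also qualifies). Place such an $r$ last in the target enumeration. Since $\mathcal{G}(\mathfrak{R}\setminus\{r\})$ is a subgraph of $\mathcal{G}(\mathfrak{R})$, it is still quasi-acyclic, so the induction hypothesis yields a simple enumeration $r_1,\ldots,r_{m-1}$ of $\mathfrak{R}\setminus\{r\}$. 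To verify the simpleness condition at the final step, observe that any $j\in \supp(r_m)\cap \bigcup_{i<m}\supp(r_i)$ must satisfy $|\mathfrak{R}_j|\geq 2$, so $j\in I$ and $j$ is a $B$-neighbor of $r_m$; the number of such $j$ is therefore at most $\deg_B(r_m)\leq 1$.

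The main obstacle is producing the relation vertex $r$ of small $B$-degree for the inductive step, and the bipartite reformulation trivializes it: the defining condition of $I$ forces index vertices to have $B$-degree at least two, so every leaf of the forest $B$ is automatically a relation. The rest of both directions amounts to unwinding the definitions of $\supp$ once the dictionary between proper cycles in $\mathcal{G}(\mathfrak{R})$ and cycles in $B$ is established.
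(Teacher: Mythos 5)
Your proof is correct, and the backward direction takes a genuinely different route from the paper's. The forward direction is essentially the same idea presented slightly differently: you pick the largest-indexed vertex $r_s$ in a hypothetical proper cycle and show the simpleness inequality fails there, whereas the paper restricts attention to the proper cycle $D$ as a subset of $\mathfrak{R}$ (hence itself simple) and notes that \emph{every} vertex of $D$ satisfies $|\supp(r)\cap\bigcup_{r'\in D-r}\supp(r')|\geq 2$, so no vertex of $D$ can be last in a simple enumeration of $D$.

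The backward direction is where the approaches really diverge. The paper argues contrapositively and directly in $\mathcal{G}(\mathfrak{R})$: if $d(r)=|\supp(r)\cap\bigcup_{r'\neq r}\supp(r')|\geq 2$ for all $r$, one finds a cycle of $\mathcal{G}(\mathfrak{R})$ carrying at least two distinct edge labels, observes that in a cycle the edges sharing a common label form a contiguous arc, and contracts each maximal monochromatic arc to a single edge to manufacture a proper cycle, a contradiction. You instead pass to the bipartite incidence graph $B$ on $\mathfrak{R}\sqcup I$ and record the dictionary between proper cycles of $\mathcal{G}(\mathfrak{R})$ and cycles of $B$ (the hypothesis $|\supp(r)\cap\supp(r')|\leq 1$ ruling out $4$-cycles is exactly what makes this go through). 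Quasi-acyclicity then becomes acyclicity of $B$, and the step that the paper leaves somewhat informal, namely producing a relation $r$ with $d(r)\leq 1$ to peel off, becomes immediate: by the definition of $I$ every index vertex has $B$-degree $\geq 2$, so the leaves of the forest $B$ must all be relation vertices. The bipartite reformulation buys you a cleaner and more mechanical inductive step and avoids the path-contraction subtleties, at the modest cost of setting up and verifying the cycle correspondence. Both arguments are sound.
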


\begin{proof}
Assume $\mathfrak{R}$ is simple. Then obviously every subset of $\mathfrak{R}$ is also simple. If $D$ is a cycle of $\mathcal{G}(\mathfrak{R})$, then for every vertex $r\in D$ (here, $D$ is identified with its set of vertices), the support of $r$ intersects with the supports of the two adjacent vertices of $r$. Hence if $D$ is proper, then $|\supp(r)\cap(\bigcup_{r'\in D-r}\supp(r'))|\geq 2$ for every $r\in D$. This implies that $D$ is not simple, a contradiction. Thus $\mathcal{G}(\mathfrak{R})$ must be quasi-acyclic.

Conversely, assuming $\mathcal{G}(\mathfrak{R})$ is quasi-acyclic, we prove that $\mathfrak{R}$ is simple. Evidently, it suffices to show that there exists $r\in \mathfrak{R}$ such that
$$d(r):=|\supp(r)\cap(\bigcup_{r'\in \mathfrak{R}-r}\supp(r'))|\leq 1.$$
Suppose on the contrary that $d(r)\geq 2$ for all $r\in \mathfrak{R}$. Then $\mathcal{G}(\mathfrak{R})$ clearly contains a cycle $D$ with at least two distinct edges. Since $|\supp(r)\cap\supp(r')|\leq 1$ for every distinct $r,r'\in \mathfrak{R}$, the edges of $D$ with the same label must form a path. Replacing each such path of $D$ by the edge connecting the two end vertices of the path (note that this edge does exist and has the same label with the edges in the path), we get a proper cycle. This contradiction completes the proof.
\end{proof}

\begin{proposition}\label{pr47}
Let $\mathfrak{R}$ be a finite subset of the space $F(\mathcal{A})$ such that the intersection graph $\mathcal{G}(\mathfrak{R})$ is quasi-acyclic. Then $J(\mathfrak{R})=I(\mathfrak{R})$ is a prime ideal. Furthermore, $J(\mathfrak{R})$ is a complete intersection.
\end{proposition}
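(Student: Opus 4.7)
The plan is to translate the hypothesis via Lemma \ref{lm46} and then assemble the two claims from machinery already in hand. By Lemma \ref{lm46}, quasi-acyclicity of $\mathcal{G}(\mathfrak{R})$ is equivalent to $\mathfrak{R}$ being simple, so I fix an enumeration $r_1,\dots,r_m$ of $\mathfrak{R}$ satisfying
$$\Big|\supp(r_i)\cap\bigcup_{j=1}^{i-1}\supp(r_j)\Big|\leq 1 \quad\text{for}\quad i=2,\dots,m.$$

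For the primality claim, the first observation is that every $r=\sum_{i=1}^m a_i r_i\in K\mathfrak{R}$ is induced from $\mathfrak{R}$ in the sense of \eqref{eq42}: the intersection condition there constrains only the supports of the chosen $r_i$ and is insensitive to whether some coefficients $a_i$ vanish, so the fixed simple enumeration witnesses inducedness of every such $r$. In particular every element of $\mathcal{C}(K\mathfrak{R})$ is induced from $\mathfrak{R}$, whence $n\mathcal{C}(K\mathfrak{R})=n\mathcal{C}(K\mathfrak{R}_0(\emptyset))=\emptyset$. For any nonempty $\Gamma\in co(\mathfrak{R})$ the subset $\Gamma'=\emptyset\subsetneq\Gamma$ lies in $co(\mathfrak{R})$ and $\mathfrak{R}_0(\emptyset)=\mathfrak{R}$, so condition (iii) of Proposition \ref{pr44} cannot be satisfied for this $\Gamma'$; hence $Q_\Gamma(\mathfrak{R})$ is not a minimal prime of $J(\mathfrak{R})$. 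Combined with Theorem \ref{th41}, this forces $I(\mathfrak{R})=Q_\emptyset(\mathfrak{R})$ to be the only minimal prime of $J(\mathfrak{R})$. The primary decomposition \eqref{eq34} already exhibits $I(\mathfrak{R})$ as a primary component of $J(\mathfrak{R})$, so together we conclude $J(\mathfrak{R})=I(\mathfrak{R})$, which is prime by Theorem \ref{th25}.

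For the complete intersection statement, I would next verify that $r_1,\dots,r_m$ are $K$-linearly independent. Since $F(\mathcal{A})$ contains no variables, $|\supp(r_i)|\geq 2$ for each $i$, and the simpleness condition then guarantees an index $k_i\in\supp(r_i)\setminus\bigcup_{j<i}\supp(r_j)$. The variable $x_{k_i}$ appears in $r_i$ but in none of $r_1,\dots,r_{i-1}$, so there can be no nontrivial linear dependence among the $r_i$. Therefore $\dim_K K\mathfrak{R}=m$, and Proposition \ref{pr26}(ii) gives $\codim J(\mathfrak{R})=\codim I(\mathfrak{R})=m$. Since $J(\mathfrak{R})$ is by definition generated by the $m$ elements $\iota(r_1),\dots,\iota(r_m)$ and has codimension $m$, it is a complete intersection.

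No step is a genuine obstacle. The only mildly delicate reading is the interpretation of \eqref{eq42} allowing zero coefficients $a_i$, which is what lets \emph{every} linear combination of the fixed simple enumeration qualify as induced; the remaining steps are direct applications of Lemma \ref{lm46}, Proposition \ref{pr44}, Theorem \ref{th41}, and Proposition \ref{pr26}.
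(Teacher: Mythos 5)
Your proof follows the paper's own argument quite faithfully: Lemma \ref{lm46} to translate quasi-acyclicity into simpleness, the observation that simpleness makes every element of $K\mathfrak{R}$ induced, Proposition \ref{pr44} and Theorem \ref{th41} to identify $Q_\emptyset(\mathfrak{R})=I(\mathfrak{R})$ as the unique minimal prime of $J(\mathfrak{R})$, and Proposition \ref{pr26} for the codimension count in the complete-intersection part. You also supply the linear-independence argument for $\mathfrak{R}$, which the paper leaves implicit but which is genuinely needed to conclude $\dim_K K\mathfrak{R}=|\mathfrak{R}|$; that is a welcome addition.

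There is, however, one step that deserves more care, and it is a step you share with the paper's surrounding discussion: passing from ``$I(\mathfrak{R})$ is the unique \emph{minimal} prime of $J(\mathfrak{R})$ and is a primary component'' to ``$J(\mathfrak{R})=I(\mathfrak{R})$.'' This implication does not hold for arbitrary ideals, because nothing so far rules out \emph{embedded} primes of $J(\mathfrak{R})$ (which would necessarily contain variables, by Corollary \ref{co33}, and would be invisible to Theorem \ref{th41} and Proposition \ref{pr44}, which only locate minimal primes). Abstractly, an ideal such as $(xyz,z^2)$ in $K[x,y,z]$ has unique minimal prime $(z)$, has $(z)$ as a primary component, and satisfies $(xyz,z^2):xy=(z)$, yet it is strictly smaller than $(z)$; so the implication needs something beyond what you cite. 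The good news is that you have already done the work to close this gap via a more direct route: your first observation is that \emph{every} $r\in K\mathfrak{R}$ is induced from the fixed simple enumeration. Feeding this into Lemma \ref{lm42} gives $\iota(r)\in J(\mathfrak{R})$ for all $r\in K\mathfrak{R}$, hence $I(\mathfrak{R})=(\iota(r)\mid r\in K\mathfrak{R})\subseteq J(\mathfrak{R})$, and the reverse inclusion is automatic, so $J(\mathfrak{R})=I(\mathfrak{R})$. This replaces the minimal-prime detour entirely and is both shorter and watertight. With that substitution, the rest of your argument, including the complete-intersection conclusion, is correct.
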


\begin{proof}
The first assertion is clear from Lemma \ref{lm46} and the discussion before it. The second one follows from
$\codim J(\mathfrak{R})=\codim I(\mathfrak{R})=\dim_K K\mathfrak{R}$ (see Proposition \ref{pr26}).
\end{proof}

\section{Examples}

From \cite[Theorem 2.3]{ST} (or Corollary \ref{co37}) it follows that an arrangement $\mathcal{A}$ is 2-formal if $\codim I(\mathcal{A})=\codim I_{\langle2\rangle}(\mathcal{A}).$ However, the converse is not true in general (the revised version of \cite{ST} will appear on arXiv.org). In this section we give several examples of 2-formal arrangements, including a family of graphic arrangements, for which the codimension of the quadratic Orlik--Terao ideal behaves badly. We show that there does not exist a linear bound for $\codim I(\mathcal{A})$ in terms of $\codim I_{\langle2\rangle}(\mathcal{A})$, even when $\mathcal{A}$ is 2-formal.

For simplicity, it will be assumed throughout this section that the field $K$ has characteristic 0. We begin with a modification of Yuzvinsky's example.

\begin{example}\label{ex51}
In order to show that the property of being 2-formal is not combinatorial, Yuzvinsky \cite{Y3} considered two arrangements in $K^3$, one is 2-formal and the other not, with isomorphic intersection lattices. We now define the following arrangement which shares 7 common hyperplanes with the two arrangements in \cite{Y3}:
$$\mathcal{A}=\mathbf{V}(yzw(y+z+w)(2y+z+w)(2y+3z+w)(2y+3z+4w)(y+w)(2y+2z+3w)).$$
A computation using Macaulay2 \cite{GS} shows that $I(\mathcal{A})={I_{\langle 2\rangle}(\mathcal{A})}:x_{[n]}$. Thus $\mathcal{A}$ is 2-formal by Corollary \ref{co37}. However, $\codim I_{\langle 2\rangle}(\mathcal{A})= 5<6=\codim I(\mathcal{A})$. In this example, $I_{\langle 2\rangle}(\mathcal{A})$ is not a radical ideal.
\end{example}

The next example illustrates that the codimension of the quadratic Orlik--Terao ideal may behave badly even for 2-formal graphic arrangements.

\begin{example}\label{ex52}
Figure~\ref{fig1}(b) depicts the graph $G_2$ obtained by ``gluing'' two copies of the graph $G$ in Figure~\ref{fig1}(a) along one ``inner'' edge of each copy. (Formally, $G_2$ is a parallel connection of the two copies of $G$; see \cite[Section 7.1]{O}.) Consider the graphic arrangement $\mathcal{A}_{G_2}$ of $G_2$. Let $\mathfrak{R}$ be the set of 8 relations corresponding to the 8 triangles of $G_2$. Then arguing similarly as in Example \ref{ex45}, $\mathfrak{R}$ is a basis for $F(\mathcal{A}_{G_2})$. Thus $\mathcal{A}_{G_2}$ is 2-formal. Evidently, $\Gamma=\{5,6,7,8,13,14,15\}\in co(\mathfrak{R})$ and $Q_\Gamma(\mathfrak{R})=(x_i\mid i\in\Gamma)$. It follows that
\[
\codim I_{\langle 2\rangle}(\mathcal{A}_{G_2})=\codim J(\mathfrak{R})\leq \codim Q_\Gamma(\mathfrak{R})=7.
\]
On the other hand, $\codim I(\mathcal{A}_{G_2})= \dim _K F(\mathcal{A}_{G_2})=8$. Hence
$$\codim I(\mathcal{A}_{G_2})\geq \codim I_{\langle 2\rangle}(\mathcal{A}_{G_2})+1.$$
More generally, for $k\geq 2$ one can consider the graph $G_k$ obtained by ``gluing'' $k$ copies of $G$, as depicted in Figure~\ref{fig2}. Then a straightforward extension of the above argument shows that $\mathcal{A}_{G_k}$ is 2-formal, $\codim I(\mathcal{A}_{G_k})= 4k$, and $\codim I_{\langle 2\rangle}(\mathcal{A}_{G_k})\leq 3k+1$. Thus, in this case,
$$\codim I(\mathcal{A}_{G_k})\geq \codim I_{\langle 2\rangle}(\mathcal{A}_{G_k})+k-1.$$
\end{example}

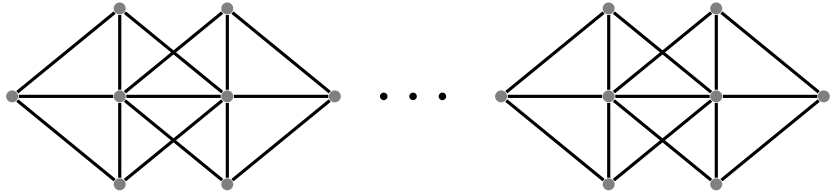
\begin{figure}[ht]

\begin{center}

\begin{tikzpicture} [scale = .13, very thick = 10mm]



 \node (n4) at (74,-3)  [Cgray] {};
  \node (n1) at (74,15) [Cgray] {};
  \node (n2) at (63,6)  [Cgray] {};
  \node (n3) at (85,6)  [Cgray] {};
\node (n5) at (74,6)  [Cgray] {};


  \foreach \from/\to in {n4/n2,n1/n3}
    \draw[] (\from) -- (\to);
\foreach \from/\to in {n2/n1,n4/n3,n5/n2,n5/n3,n5/n1,n5/n4}
    \draw[] (\from) -- (\to);

 \node (n6) at (85,15)  [Cgray] {};
  \node (n7) at (96,6) [Cgray] {};
  \node (n8) at (85,-3)  [Cgray] {};
  \foreach \from/\to in {n6/n7,n7/n8,n5/n6,n5/n8,n3/n6,n3/n8,n3/n7}
    \draw[] (\from) -- (\to);


  \node (n2) at (101,6)  [Cblack2] {};
  \node (n2) at (104,6)  [Cblack2] {};
  \node (n2) at (107,6)  [Cblack2] {};


 \node (n4) at (124,-3)  [Cgray] {};
  \node (n1) at (124,15) [Cgray] {};
  \node (n2) at (113,6)  [Cgray] {};
  \node (n3) at (135,6)  [Cgray] {};
\node (n5) at (124,6)  [Cgray] {};

  \foreach \from/\to in {n4/n2,n1/n3}
    \draw[] (\from) -- (\to);
\foreach \from/\to in {n2/n1,n4/n3,n5/n2,n5/n3,n5/n1,n5/n4}
    \draw[] (\from) -- (\to);

 \node (n6) at (135,15)  [Cgray] {};
  \node (n7) at (146,6) [Cgray] {};
  \node (n8) at (135,-3)  [Cgray] {};
  \foreach \from/\to in {n6/n7,n7/n8,n5/n6,n5/n8,n3/n6,n3/n8,n3/n7}
    \draw[] (\from) -- (\to);


\end{tikzpicture}
\caption{Graph $G_k$ consisting of $k$ copies of $G$}
\label{fig2}
\end{center}

\end{figure}

Our last example is aimed to show that even in the class of 2-formal arrangements one cannot expect a linear bound for the codimension of the Orlik--Terao ideal in terms of the codimension of the quadratic Orlik--Terao ideal.

\begin{example}\label{ex53}

Let $G$ be a graph on vertex set $[m]$ and edge set $E$ with $|E|=l$. Let $y_1,\ldots,y_m$ be a basis for the dual space $(K^m)^*$ of $K^m$. Consider the arrangement $\mathcal{B}_G$ in $K^m$ consisting of $m$ hyperplanes indexed by the vertices and $l$ hyperplanes indexed by the edges of $G$ as follows:
$$H_i= \ker y_i\ \ \text{for}\ \ i\in [m]\ \ \text{and}\ \ H_e=\ker(y_i+y_j)\ \text{for}\ e=\{i,j\}\in E.$$
Since $\mathcal{B}_G$ has rank $m$, $\dim_KF(\mathcal{B}_G)=(m+l)-m=l$. For each edge $e=\{i,j\}\in E$, the 3-circuit $\{H_i,H_j,H_e\}$ gives rise to a relation $r_e=x_e-x_i-x_j$. Let $\mathfrak{R}_G=\{r_e\mid e\in E\}$. Then $\mathfrak{R}_G$ is linearly independent because for every $e\in E$, $r_e$ is the only relation in $\mathfrak{R}_G$ involving $x_e$. Thus $\mathfrak{R}_G$ is a basis for $F(\mathcal{B}_G)$, and hence $\mathcal{B}_G$ is a 2-formal arrangement.

Since $\ch(K)=0$, it is apparent that every 3-circuit of $\mathcal{B}_G$ is of the form $\{H_i,H_j,H_e\}$ for some $e=\{i,j\}\in E$. Thus $I_{\langle 2\rangle}(\mathcal{B}_{G})=J(\mathfrak{R}_G).$ Clearly, $\Gamma=\{1,\ldots,m\}$ is an $\mathfrak{R}_G$-cover and $Q_\Gamma(\mathfrak{R}_G)=(x_1,\ldots,x_m)$. It follows that $\codim I_{\langle 2\rangle}(\mathcal{B}_{G})\leq m$. On the other hand, $\codim I_{\langle 2\rangle}(\mathcal{B}_{G})\leq \codim I(\mathcal{B}_{G})=l$. Therefore, $\codim I_{\langle 2\rangle}(\mathcal{B}_{G})\leq\min\{m,l\}$.

\begin{claim}
Suppose $G$ is a connected graph. Then $\codim I_{\langle 2\rangle}(\mathcal{B}_{G})=\min\{m,l\}$.
\end{claim}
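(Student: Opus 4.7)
Since the inequality $\codim I_{\langle 2 \rangle}(\mathcal{B}_G) \leq \min\{m, l\}$ has already been established, the plan is to prove the matching lower bound. Writing $\mathfrak{R} = \mathfrak{R}_G$ and recalling that $I_{\langle 2 \rangle}(\mathcal{B}_G) = J(\mathfrak{R})$, I apply Theorem \ref{th41} to reduce the claim to showing
\[
|\Gamma| + \dim_K K\mathfrak{R}_0(\Gamma) \geq \min\{m, l\}
\]
for every $\mathfrak{R}$-cover $\Gamma$.

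Next, I would decompose an arbitrary cover as the disjoint union $\Gamma = S \cup E_S$ with $S := \Gamma \cap [m]$ and $E_S := \Gamma \cap E$. Because $\supp(r_e) = \{i, j, e\}$ for $e = \{i, j\}$, unwinding the definition of an $\mathfrak{R}$-cover shows that every edge of $G$ crossing between $S$ and $[m] \setminus S$ must lie in $E_S$, i.e.\ $\delta_G(S) \subseteq E_S$, where $\delta_G(S)$ denotes this cut; moreover, any edge $e = \{i,j\}$ with $i, j \notin S$ is automatically forced out of $E_S$. Consequently $\mathfrak{R}_0(\Gamma)$ consists exactly of the relations $r_e$ coming from edges of the induced subgraph $G[[m] \setminus S]$, and the linear independence of $\{r_e\}_{e \in E}$ (noted in the example) gives $\dim_K K\mathfrak{R}_0(\Gamma) = h$, where $h := |E(G[[m] \setminus S])|$. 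Setting $s := |S|$ and $t := |E_S|$, the task thus reduces to the combinatorial inequality $s + t + h \geq \min\{m, l\}$.

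The key estimate comes from the connectivity of $G$. If $S = \emptyset$, the bound is immediate since $h = l$. If $S \neq \emptyset$, contracting $S$ to a single vertex produces a connected (multi)graph on $m - s + 1$ vertices whose edge set is precisely $\delta_G(S) \cup E(G[[m] \setminus S])$, hence of cardinality at least $m - s$; combined with $t \geq |\delta_G(S)|$, this gives $s + t + h \geq s + (m - s) = m$. Either way $s + t + h \geq \min\{m, l\}$, which finishes the argument. The one subtlety I anticipate is the clean translation of the abstract cover condition into the graph-theoretic containment $\delta_G(S) \subseteq E_S$; once this dictionary is in place, the contraction step invoking connectivity of $G$ is completely routine.
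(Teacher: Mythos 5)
Your proof is correct, and it takes a genuinely different route from the paper's. The paper argues by cases on the cyclomatic number $c = l - m + 1$: for $c = 0$ (a tree) it shows the intersection graph $\mathcal{G}(\mathfrak{R}_G)$ is quasi-acyclic and invokes Proposition \ref{pr47} to get $I_{\langle 2\rangle}(\mathcal{B}_G) = I(\mathcal{B}_G)$ exactly; for $c = 1$ it adds one edge to a spanning tree and uses Lemma \ref{lm32} to see the codimension jumps by one; for $c > 1$ it deletes edges to reduce to the $c = 1$ case via the monotonicity $\codim I_{\langle 2\rangle}(\mathcal{B}_{G''}) \leq \codim I_{\langle 2\rangle}(\mathcal{B}_{G})$. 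You instead apply Theorem \ref{th41} head-on, translate the $\mathfrak{R}_G$-cover condition into the graph-theoretic dictionary $\Gamma = S \cup E_S$ with $\delta_G(S) \subseteq E_S$ and $E_S \cap E(G[[m]\setminus S]) = \emptyset$, and reduce the lower bound to the inequality $s + t + h \geq m$, which you establish by contracting $S$ in the connected graph $G$. I checked each step: the identification $\mathfrak{R}_0(\Gamma) = \{r_e \mid e \in E(G[[m]\setminus S])\}$ is right, the contraction $G/S$ is connected on $m-s+1$ vertices with edge set $\delta_G(S) \sqcup E(G[[m]\setminus S])$, and the chain $s + t + h \geq s + |\delta_G(S)| + h \geq m$ closes the argument. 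Your approach is more uniform (no case split on $c$) and makes visible exactly which covers $\Gamma$ can be minimizers, namely those governed by vertex subsets and their cuts; the paper's approach is more structural in that the tree case identifies precisely when $J(\mathfrak{R}_G)$ is actually prime, information your lower-bound computation does not by itself produce.
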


\begin{proof}
 Let $c=l-m+1$ be the cyclomatic number (or nullity) of $G$. Note that $c\geq0$. We distinguish the following cases:

 \emph{Case 1}: $c=0$, i.e., $l=m-1$ and $G$ is a tree. In this case, the intersection graph $\mathcal{G}(\mathfrak{R}_G)$ of $\mathfrak{R}_G$ is quasi-acyclic. Indeed, we first have $|\supp(r_e)\cap\supp(r_{e'})|\leq1$ for every distinct edges $e,e'$ of $E$. Suppose that $D$ is a proper cycle of $\mathcal{G}(\mathfrak{R}_G)$ with vertices $r_{e_1},\ldots,r_{e_k}$ and edge labels $\{i_1\},\ldots,\{i_k\}$, where $\{i_j\}=\supp(r_{e_j})\cap\supp(r_{e_{j+1}})$ for $j=1,\ldots,k$ (here, by convention, $e_{k+1}=e_1$). Then clearly $e_{j+1}=\{i_j,i_{j+1}\}$ for $j=1,\ldots,k$. It follows that the edges $e_1,\ldots,e_k$ form a cycle of $G$. However, this is impossible since $G$ is a tree. Thus $\mathcal{G}(\mathfrak{R}_G)$ must be quasi-acyclic. Now by Proposition \ref{pr47}, $I_{\langle 2\rangle}(\mathcal{B}_{G})=I(\mathcal{B}_{G})$ and so
 $$\codim I_{\langle 2\rangle}(\mathcal{B}_{G})=\codim I(\mathcal{B}_{G})=l=\min\{m,l\}.$$

 \emph{Case 2}: $c=1$, i.e., $l=m$. Let $G'$ be a spanning tree of $G$. Then $G'$ is obtained from $G$ by deleting some edge $e\in E$. We have $\mathfrak{R}_G=\mathfrak{R}_{G'}\cup\{r_e\}$ and $I_{\langle 2\rangle}(\mathcal{B}_{G})=I_{\langle 2\rangle}(\mathcal{B}_{G'})+(\iota(r_e))$. Since $\mathfrak{R}_G$ is linearly independent, $r_e\not\in K\mathfrak{R}_{G'}$. So by Lemma \ref{lm32}, $\iota(r_e)\not\in I_{\langle 2\rangle}(\mathcal{B}_{G'})$. Now according to Case 1, $I_{\langle 2\rangle}(\mathcal{B}_{G'})$ is a prime ideal and $\codim I_{\langle 2\rangle}(\mathcal{B}_{G'})=l-1$.  It follows that
 $$\codim I_{\langle 2\rangle}(\mathcal{B}_{G})=\codim I_{\langle 2\rangle}(\mathcal{B}_{G'})+1=l=m.$$

\emph{Case 3}: $c>1$, i.e., $l>m$. Let $G''$ be the subgraph obtained from $G$ by deleting $l-m$ edges. Then the cyclomatic number of $G''$ is 1, and it follows from Case 2 that
 $$\codim I_{\langle 2\rangle}(\mathcal{B}_{G})\geq\codim I_{\langle 2\rangle}(\mathcal{B}_{G''})=m=\min\{m,l\}.$$
 Since the reverse inequality is already known, the claim has been proved.
\end{proof}
\end{example}

As an easy consequence of the previous example, we obtain:

\begin{corollary}\label{co54}
 For every integer $k\geq1$, there exists a 2-formal arrangement $\mathcal{A}$ such that
 $$\codim I(\mathcal{A})=k\ {\cdot}\ \codim I_{\langle 2\rangle}(\mathcal{A}).$$
\end{corollary}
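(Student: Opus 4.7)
The plan is to invoke Example \ref{ex53}, where for any connected graph $G$ with $m$ vertices and $l$ edges the Claim establishes that $\mathcal{B}_G$ is a 2-formal arrangement with $\codim I(\mathcal{B}_G)=l$ and $\codim I_{\langle 2\rangle}(\mathcal{B}_G)=\min\{m,l\}$. Thus, provided $l\geq m$, one has
\[
\frac{\codim I(\mathcal{B}_G)}{\codim I_{\langle 2\rangle}(\mathcal{B}_G)}=\frac{l}{m},
\]
and the corollary reduces to exhibiting, for each $k\geq 1$, a connected simple graph $G$ with exactly $l=km$ edges.

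Next I would verify such a graph exists. Since we are dealing with simple graphs, we need $km\leq\binom{m}{2}$, i.e., $m\geq 2k+1$. A concrete and minimal choice is the complete graph $G=K_{2k+1}$, which has $m=2k+1$ vertices and $l=\binom{2k+1}{2}=k(2k+1)=km$ edges, and is connected. For this choice $l\geq m$ holds for every $k\geq 1$, so by Example \ref{ex53} the arrangement $\mathcal{A}:=\mathcal{B}_{K_{2k+1}}$ is 2-formal and satisfies
\[
\codim I(\mathcal{A})=k(2k+1)=k\cdot m=k\cdot\codim I_{\langle 2\rangle}(\mathcal{A}),
\]
which is the required identity.

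There is essentially no obstacle: the corollary is a direct consequence of the Claim in Example \ref{ex53}, and the only task is the combinatorial one of realizing the ratio $l/m=k$ by a connected simple graph, which $K_{2k+1}$ accomplishes. One could equally well use any connected graph on $m\geq 2k+1$ vertices with exactly $km$ edges (for instance, a spanning tree of $K_m$ together with $km-(m-1)$ additional edges chosen arbitrarily), which gives a whole family of examples realizing the stated equality.
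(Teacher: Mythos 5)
Your proposal is correct and matches the paper's proof exactly: both invoke the Claim in Example \ref{ex53} and choose $G=K_{2k+1}$, the complete graph on $2k+1$ vertices, so that $l=\binom{2k+1}{2}=k(2k+1)$ and $m=2k+1$, giving the ratio $k$. The additional remark about other graphs on $m\geq 2k+1$ vertices with $km$ edges is a nice but unnecessary aside.
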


\begin{proof}
Let $G$ be the complete graph on $2k+1$ vertices. According to Example \ref{ex53}, one may take $\mathcal{A}=\mathcal{B}_{G}.$
\end{proof}


As we have seen, for an arrangement $\mathcal{A}$ the property of being 2-formal is not good enough to afford the equality $\codim I(\mathcal{A})=\codim I_{\langle 2\rangle}(\mathcal{A}).$ A natural question then arises: what does this equality mean? Or more specifically, can this equality be followed from other properties which are stronger than 2-formality? For an arrangement $\mathcal{A}$ with quadratic Orlik--Solomon algebra, the latter question is easy to answer affirmatively: according to \cite[Corollary 2.17]{F}, the underlying matroid of $\mathcal{A}$ is line-closed, and so by \cite[Proposition 21]{SSV}, $ \mathbf{V}(I(\mathcal{A}))=\mathbf{V}(I_{\langle 2\rangle}(\mathcal{A}))$. In particular, the answer for rational $K(\pi, 1)$ arrangements is also affirmative because these arrangements have quadratic Orlik--Solomon algebras; see \cite{FR1}. It would be interesting to know the answer for free and $K(\pi, 1)$ arrangements.

\begin{question}\label{qu55}
 Let $\mathcal{A}$ be a free or $K(\pi, 1)$ arrangement. Is it true that $\codim I(\mathcal{A})=\codim I_{\langle 2\rangle}(\mathcal{A})$?
\end{question}

\noindent\textbf{Acknowledgement.} The authors wish to thank Tim R\"{o}mer for helpful discussions and valuable suggestions. They are grateful to Hal Schenck for valuable comments on an earlier draft of this paper and for sharing unpublished versions of \cite{ST}.

\end{document}